\newcounter{thm}[section]
\newtheorem{theorem}[thm]{Theorem}
\newcounter{dfn}[section]
\newcounter{rmk}[section]
\newcommand{\normmm}[1]{{\left\vert\kern-0.25ex\left\vert\kern-0.25ex\left\vert #1
   \right\vert\kern-0.25ex\right\vert\kern-0.25ex\right\vert}}
\newcommand{\bfA}[1]{{\textbf A}(#1)}
\newcommand{\jw}[1]{{{\color{purple} (JW: #1)}}}
\DeclareMathOperator*{\argmin}{arg\,min}
\title{Numerical Optimization for Tensor Disentanglement}
\begin{document}
\author[1]{\fnm{Julia} \sur{Wei}}\email{juliawei@g.harvard.edu}

\author[2]{\fnm{Alec} \sur{Dektor}}\email{adektor@lbl.gov}
%\equalcont{These authors contributed equally to this work.}

\author[3]{\fnm{Chungen} \sur{Shen}}\email{shenchungen@usst.edu.cn}
%\equalcont{These authors contributed equally to this work.}

\author[4]{\fnm{Zaiwen} \sur{Wen}}\email{wenzw@pku.edu.cn}

\author*[2]{\fnm{Chao} \sur{Yang}}\email{CYang@lbl.gov}

\affil[1]{\orgdiv{Department of Physics}, \orgname{Harvard University}, \orgaddress{\street{17 Oxford St.}, \city{Cambridge}, \state{MA}  \postcode{02138}, \country{USA}}}

\affil[2]{\orgdiv{Applied Mathematics and Computational Research Division}, \orgname{Lawrence Berkeley National Laboratory}, \orgaddress{\street{1 Cyclotron Rd}, \city{Berkeley}, \state{CA} \postcode{94720}, \country{USA}}}

\affil[3]{\orgdiv{College of Science}, \orgname{University of Shanghai for Science and Technology}, \orgaddress{\street{516 Jungong Rd}, \city{Shanghai}, \postcode{200093}, 
%\state{State}, 
\country{China}}}

\affil[4]{\orgdiv{Beijing International Center for Mathematical Research}, \orgname{Peking University}, \orgaddress{\street{78  Jingchunyuan}, \city{Beijing}, \postcode{100871}, 
%\state{State}, 
\country{China}}}

%\affil[3]{\orgdiv{Department}, \orgname{Organization}, \orgaddress{\street{Street}, \city{City}, \postcode{610101}, \state{State}, \country{Country}}}

\abstract{ 
Tensor networks provide compact and scalable representations of high-dimensional data, enabling efficient computation in fields such as quantum physics, numerical partial differential equations (PDEs), and machine learning.
This paper focuses on tensor disentangling, the task of identifying transformations that reduce bond dimensions by exploiting gauge freedom in the network.
We formulate this task as an optimization problem over orthogonal matrices acting on a single tensor's indices, aiming to minimize the rank of its matricized form. We present Riemannian optimization methods and a joint optimization framework that alternates between optimizing the orthogonal transformation for a fixed low-rank approximation and optimizing the low-rank approximation for a fixed orthogonal transformation, offering a competitive alternative when the target rank is known. To seek the often unknown optimal rank, we introduce a binary search strategy integrated with the disentangling procedure. Numerical experiments on random tensors and tensors in an approximate isometric tensor network state are performed to compare different optimization methods and explore the possibility of combining different methods in a hybrid approach.
}

\keywords{Tensor networks, two-dimensional quantum states, tensor disentanglement, disentangler, Riemannian optimization, entanglement, gauge invariance}

\maketitle

\section{Introduction}
\label{sec:intro}
Tensor networks are widely used in computational science to efficiently represent high-dimensional data and operators, enabling scalable algorithms for problems that would otherwise be intractable. A tensor network is a structured collection of multi-dimensional arrays (tensors) connected by products over shared indices. Prominent examples of tensor network structures include matrix product states (MPS)~\cite{Perez2006} or equivalently tensor trains (TT)~\cite{Oseledets2011}, projected entangled pair states (PEPS)~\cite{Orus2014, Cirac2021}, the multi-entanglement renormalization ansatz (MERA)~\cite{vidalClassQuantumManyBody2008, Evenbly2015}, as well as the canonical polyadic (CP), Tucker~\cite{kolda2009tensor}, and Hierarchical Tucker decompositions~\cite{Uschmajew2013}. Tensor networks have been used as compact representations of large-scale data in several applications, e.g., quantum many-body systems \cite{Orus2019}, numerical methods for partial differential equations (PDEs) \cite{Bachmayr2016, Dektor2021}, data analysis \cite{cichocki2014}, and machine learning \cite{Stoudenmire2018, Zangrando2024}. 

A convenient diagrammatic notation for describing algorithms involving tensor networks is shown in Figure~\ref{fig:TNexample}~\cite{SCHOLLWOCK201196,ciracMatrixProductStates2021}. In these diagrams, each circle represents a tensor, and each connecting line denotes an index over which the two tensors are contracted. These connecting lines are called internal legs or bonds, and they encode the internal structure of the network. Lines that connect to only one tensor are called open legs, and they correspond to the indices of the overall tensor represented by the network. The number of lines emanating from a tensor indicates its dimension, and the number of open legs in the network indicates the dimension of the resulting global tensor. The size of each internal bond, i.e., the number of values it can take, is called the bond dimension, and it plays a key role in determining the representational power and computational cost of the network. 

\begin{figure}[!hbtp]
    \centering    \includegraphics[width=0.8\textwidth]{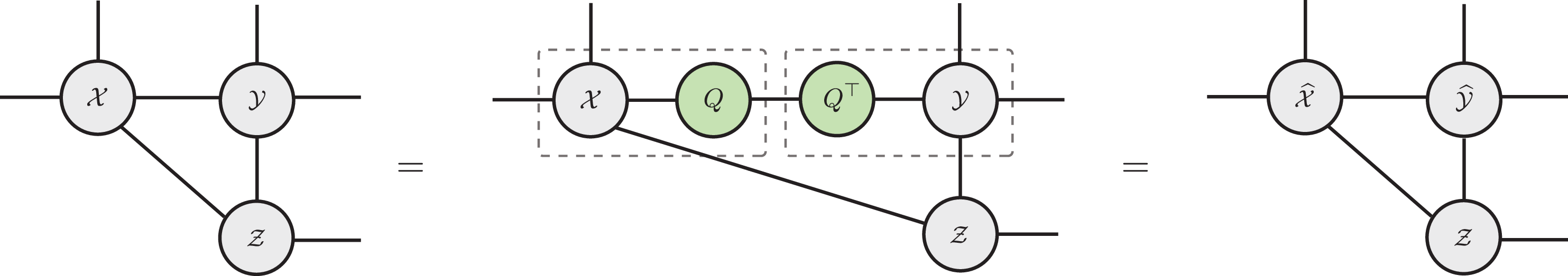}
    \caption{A tensor network consisting of three tensors $\mathcal{X}, \mathcal{Y}, \mathcal{Z}$. Applying a gauge transformation $Q$ to the tensor network results in a new, equivalent tensor network consisting of $\widehat{\mathcal{X}}, \widehat{\mathcal{Y}}, {\mathcal{Z}}$. }
    \label{fig:TNexample} 
\end{figure}

Many tensor networks have internal degrees of freedom that can be optimized to reduce the size of certain bonds, leading to a more efficient representation. When this is possible, we say that the tensor network can be \textit{disentangled}. One example is inserting an invertible matrix and its inverse between two connected tensors. This process, often referred to as a gauge transformation, leaves the tensor network invariant, though it can change the bond dimension when loops are present in the network. 
Figure~\ref{fig:TNexample} illustrates a gauge transformation, where an orthogonal matrix $Q$ and its inverse $Q^{\top}$ are inserted between tensors $\mathcal{X}$ and $\mathcal{Y}$. This transformation yields an equivalent tensor network in which $\widehat{\mathcal{X}}$ and $\widehat{\mathcal{Y}}$ are obtained by contracting $\mathcal{X}$ with $Q$ and $\mathcal{Y}$ with $Q^{\top}$, respectively. 
In this case, it may be possible to choose a $Q$ so that $\widehat{\mathcal{X}}$ can be decomposed into the contraction of two tensors $\widehat{\mathcal{U}}$ and $\widehat{\mathcal{W}}$ with lower bond dimension, e.g., using a truncated singular value decomposition, as shown in Figure~\ref{fig:NewTensor}, where the dimension across which the SVD is performed is indicated by a red dashed line. Contracting $\widehat{\mathcal{W}}$ with $\mathcal{Z}$ to form $\widehat{\mathcal{Z}}$ results in an equivalent tensor network with a reduced bond dimension between $\widehat{\mathcal{U}}$ and $\widehat{\mathcal{Z}}$. 

Tensor disentanglers play a key role in a variety of tensor network algorithms. They are particularly important in classical simulations of quantum many-body systems, appearing in methods such as tensor network renormalization~\cite{evenblyAlgorithmsEntanglementRenormalization2009, Evenbly2015}, MERA~\cite{vidalClassQuantumManyBody2008, evenbly2014class}, purified MPS~\cite{karraschReducingNumericalEffort2013,hauschildFindingPurificationsMinimal2018}, and isometric tensor network states (isoTNS)~\cite{zaletel2020isometric, lin2022efficient, sappler2025diagonal}. 
Disentanglers can also be used to decompose global unitary operators into sequences of unitary gates~\cite{slagle2021fast}, enabling efficient implementation of matrix operations on quantum hardware, with applications in quantum machine learning~\cite{Aizpurua2024}. 
Related optimization problems arise when determining basis transformations that reduce entanglement of a quantum state, e.g., Gaussian mode transformations in quantum chemistry~\cite{Krumnow2016}, and determining coordinate transformations so that high-dimensional functions and operators admit low-rank tensor network representations~\cite{Dektor2023,Dektor2024}. 

\begin{figure}[!hbtp]
    \centering
    \includegraphics[width=0.8\textwidth]{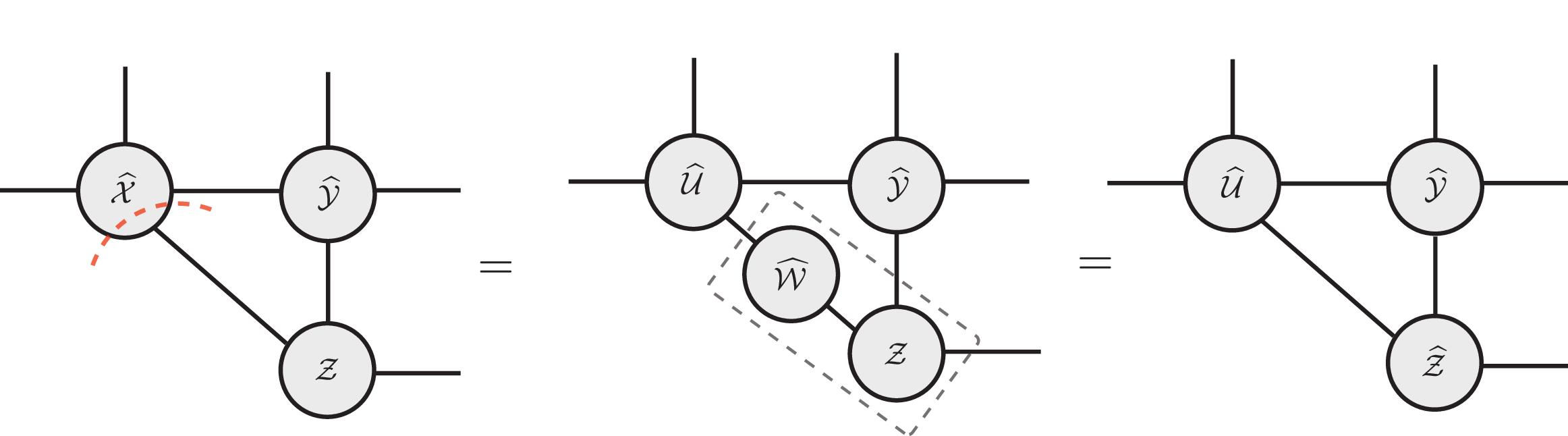}
    \caption{Performing a truncated SVD on $\hat{\mathcal X}$ across the dimension indicated by the red dashed line yields $\hat{{\mathcal U}}$ and $\hat{{\mathcal W}}$, which can have smaller dimensions than the bond connecting $\hat{\mathcal X}$ and ${\mathcal Z}$, provided the gauge transformation $Q$ shown in Figure~\ref{fig:TNexample} is chosen to disentangle $\hat{\cal{X}}$. Contracting $\hat{\mathcal W}$ and ${\mathcal Z}$ to obtain $\hat{{\mathcal Z}}$ yields a new tensor network with reduced bond dimension. \label{fig:NewTensor}}
\end{figure}

In this paper, we focus on a single tensor ${\cal X}$ within a tensor network, where bipartitioning the indices of ${\cal X}$ results in an unfolding matrix, $M$. We seek an orthogonal matrix $Q$ that, when contracted with $\mathcal{X}$ along a subset of indices, yields an $M(Q)$ with a low-rank approximation. Thus, $Q$ disentangles the system described by ${\cal X}$ along a particular bipartition.

We formulate the tensor disentangling task as a numerical optimization problem, highlighting the choice of different objective functions and incorporating the manifold constraint that restricts $Q$ to be an orthogonal matrix. The optimization problem can be solved by first- and second-order Riemannian optimization methods, which require the computation of Riemannian gradients and Hessian-vector products. We detail efficient strategies for evaluating these quantities and analyze their computational complexity, as they represent the primary cost of Riemannian optimization.

In addition, we reformulate the disentangling problem as a joint optimization over $Q$ and a low-rank approximation of $M(Q)$, which can be solved via an alternating minimization algorithm. Through numerical experiments on isoTNS tensors, a subclass of PEPS representing two-dimensional quantum states, we demonstrate that this approach can outperform Riemannian methods in terms of efficiency. However, it assumes prior knowledge of the optimal rank of $M(Q)$, which is typically unknown in practice. To address this, we propose methods for finding an initial estimate of the rank and optimizing it iteratively using a binary search algorithm integrated with the disentangling procedure. Finally, through numerical experiments on random tensors, we show that combining Riemannian optimization with alternating minimization yields an efficient hybrid algorithm.

This paper is organized as follows. In the next section, we give a precise formulation of the tensor disentangling task as an optimization problem and introduce the notations used to define the objective function and constraints.  Section~\ref{sec:unconstrained} details a specific parameterization of orthogonal matrices that allows the task to be cast as an unconstrained optimization problem. 
We present Riemannian optimization methods~\cite{hauruRiemannianOptimizationIsometric2021,luchnikovQGOptRiemannianOptimization2021a,lin2022efficient} for solving the disentangling optimization problem in section~\ref{sec:riemannian}. 
In section~\ref{sec:altopt}, we present the joint optimization framework and present the alternating minimization algorithm. We discuss some practical considerations in performing disentangling optimization in section~\ref{sec:practical}. Numerical examples are given in section~\ref{sec:examples} to demonstrate the effectiveness of the methods proposed in this work and compare the efficiency of different methods for random and isoTNS tensors.

\section{Problem Formulation and Notation}

We consider a $4$-dimensional tensor ${\cal X} \in \mathbb{R}^{l \times r \times b \times c}$. To describe the disentangling problem, we introduce a few operators. Let $\textbf{M}: \mathbb{R}^{l\times r\times b\times c} \to \mathbb{R}^{lr\times bc}$ be the map that flattens a four-dimensional tensor into a matrix by combining the first two dimensions into rows and the last two dimensions into columns. Throughout, we let $X=\textbf{M}({\cal X}) \in \mathbb{R}^{lr \times bc}$. We denote the reverse mapping of a matrix of dimension $lr \times bc$ to a tensor of dimension $l \times r \times b \times c$ by $\textbf{M}^{-1}$ so that ${\cal X} = \textbf{M}^{-1}(X)$. 
Let $\textbf{P}$ be a permutation that rearranges the dimensions of a tensor in the order $[1,4,2,3]$ so that $\textbf{P}({\cal X}) \in \mathbb{R}^{l\times c\times r\times b}$. We use $\textbf{P}^{-1}$ to denote the inverse of $\textbf{P}$ so that $\textbf{P}^{-1}(\textbf{P}({\cal X}))={\cal X}$. 
Finally, we define composition map  $\textbf{A}=\textbf{M}\circ \textbf{P}\circ \textbf{M}^{-1} : \mathbb{R}^{lr \times bc} \to \mathbb{R}^{lc \times rb}$, which is a unitary operator with inverse ${\bf A}^{-1}:\mathbb{R}^{lc \times rb} \to \mathbb{R}^{lr \times bc}$ defined as $\textbf{A}^{-1} \equiv \textbf{M}\circ \textbf{P}^{-1} \circ \textbf{M}^{-1}$. 
In Figure~\ref{fig:tensor_operators}, we provide graphical depictions of these operators. 
\begin{figure}[!hbtp]
    \centering
    \includegraphics[width=0.5\textwidth]{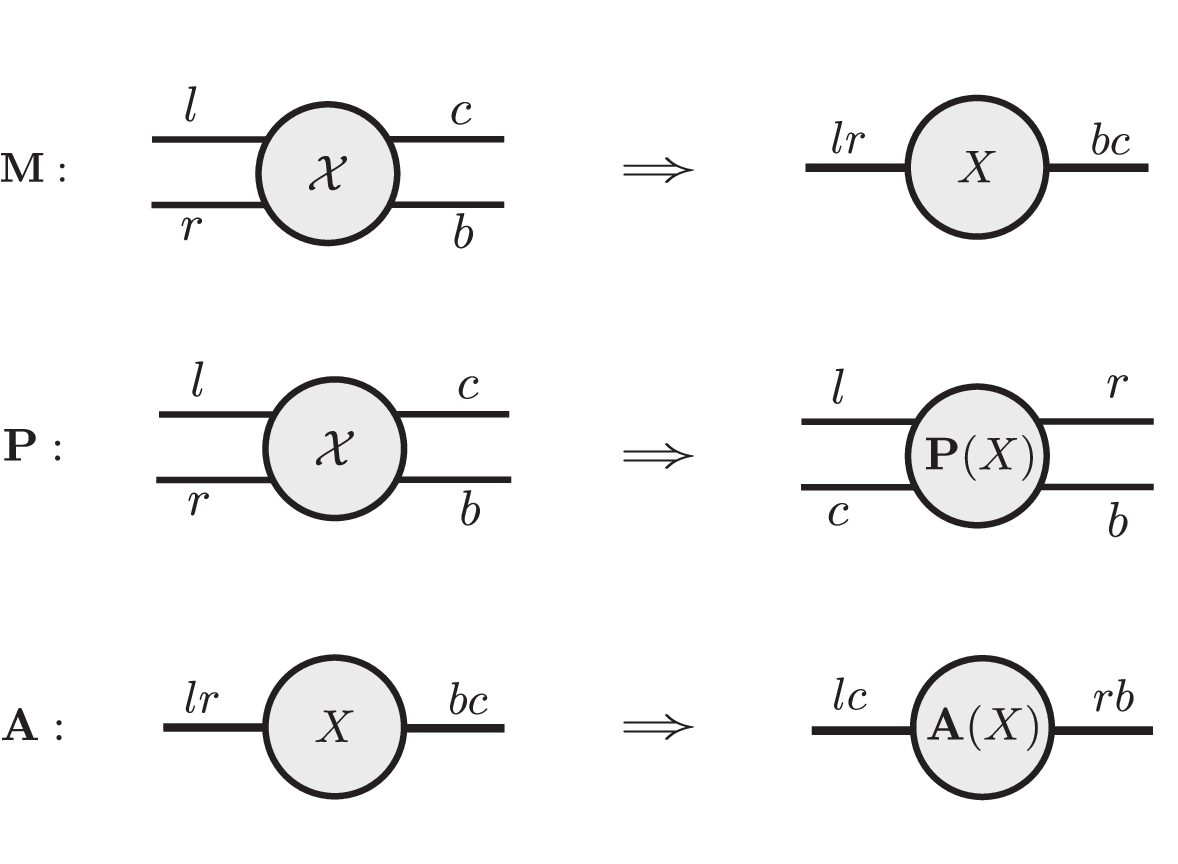}
    \caption{\label{fig:tensor_operators} Tensor reshaping and permutation operations used to define the disentangler optimization problem. }
\end{figure}
The optimization algorithms described in the following sections can be trivially extended to higher-dimensional tensors by defining alternative reshaping and permutation operators.

We are interested in finding an orthogonal matrix $Q \in \mathbb{R}^{lr\times lr}$ that minimizes
\begin{equation} \label{eq:sv_tail}
c_k(Q) = \sum_{i=k+1}^{m} \sigma_{i}^2(\bfA{QX})
\end{equation}
for some $k>1$ and $k \ll m$. Here $m = \min (lc,br)$ and the function $\sigma_i(\bullet)$ returns the $i$-th singular value of its input. The function in \eqref{eq:sv_tail} measures the error of replacing $\bfA{QX}$ with its best rank-$k$ approximation. Equivalently, \eqref{eq:sv_tail} measures the entanglement across a bond dimension of ${\cal X}$. When both \eqref{eq:sv_tail} and $k$ are small, the matrix $\bfA{QX}$ can be replaced by a low-rank approximation, thereby reducing the total number of degrees of freedom in the tensor network representation of ${\cal X}$. In the physics literature, this procedure is often referred to as \textit{disentanglement}, and $Q$ is referred to as a \textit{disentangler}.
A tensor network diagram of the disentangler problem is shown in Figure~\ref{fig:disentangler}. 
\begin{figure}[!hbtp]
    \centering
    \includegraphics[width=0.5\textwidth]{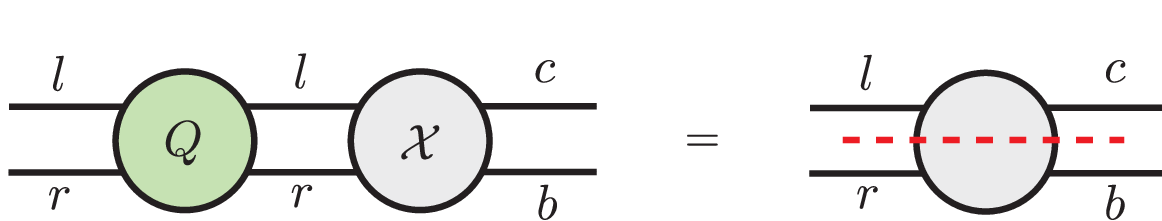}
    \caption{\label{fig:disentangler} Tensor network diagram of the disentangler problem. The unitary operator $Q$ is to be optimized so that entanglement across the red dashed line is minimized.}
\end{figure}
Constraining $Q$ to be orthogonal ensures that $\textbf{A}(QX)$ has the same Frobenius norm as that of $\bfA{X}$, i.e., 
\begin{equation}
\sum_{i=1}^{m} \sigma_i^2(\textbf{A} (Q X)) = \sum_{i=1}^{m} \sigma_i^2(\bfA{X}). 
\label{eq:sigma2sum}
\end{equation} 
Such a constraint is important so that \eqref{eq:sv_tail} is minimized relative to the norm of the tensor ${\cal X}$ that is being disentangled. 
More broadly, we can consider objectives of the form 

\begin{equation} \label{eq:g_def}
    f(Q) = \sum_{i=1}^m \phi\left(\sigma_i(\bfA{QX})\right),
\end{equation}
which depend on the choice of function $\phi: \mathbb{R} \to \mathbb{R}$. We can then formulate the disentangling of ${\cal X}$ as a constrained optimization problem 
\begin{equation} \label{eq:opt_constrained} 
\min_{Q^{\top}Q = I} \sum_{i=1}^m \phi\left(\sigma_i(\bfA{QX})\right).
\end{equation}

If $\phi(t) = \phi_{\mu}(t)$ is the piecewise function 
\begin{equation} \label{eq:Heaviside}
\phi_{\mu}(t) = \left \{
\begin{array}{ll}
0 & \mbox{if $t \leq \mu$}, \\
t^2 & \mbox{if $t  > \mu$},
\end{array}
\right.
\end{equation}
for some thresholding parameter $\mu$ satisfying $\sigma_k < \mu < \sigma_{k+1}$, then the objective function in \eqref{eq:opt_constrained} is equivalent to \eqref{eq:sv_tail}. In this case, the disentangler $Q$ is chosen to minimize the error in the rank-$k$ approximation of $\bfA{QX}$.

When the parameter $\mu$ (or equivalently, the target disentanglement rank $k$) is not known a priori, other objective functions can be used to disentangle ${\cal X}$. One such function is the von Neumann entropy~\cite{nielsenQuantumComputationQuantum2010}, 
\begin{equation}
S_\textrm{vN} \equiv - \sum_{i=1}^m \sigma_i^2 \ln \sigma_i^2, \label{eqn:vonneumann}
\end{equation}
obtained by setting $\phi(t)$ to 
$\phi(t) = -t^2\ln t^2$.

Another possibility is to choose $\phi(t)= t^{2\alpha}$ for some $\alpha \in (0,1)$. Minimizing the objective function \eqref{eq:g_def} obtained from such a $\phi(t)$ is equivalent to minimizing the Rényi-$\alpha$ entropy~\cite{verstraeteMatrixProductStates2006}, defined by
\begin{equation}
S^\alpha \equiv \frac{1}{1-\alpha}\ln(\sum_{i=1}^m \sigma_i^{2\alpha}).
\label{eqn:renyi}
\end{equation}
When $\alpha=1/2$, minimizing $S^{1/2}$ is equivalent to minimizing $\sum_{i=1}^m \sigma_i$, which is the nuclear norm of $\textbf{A}(QX)$, a well-known surrogate objective for the rank-minimization problem \cite{Recht2010, Lu2015}. 

Although we focus on real valued tensor in this work, the algorithms to be presented below can be easily extended to complex valued tensors, and the disentangler can be a complex and unitary matrix.

\section{Unconstrained optimization}
\label{sec:unconstrained}
The constrained optimization problem \eqref{eq:opt_constrained} can be reformulated as an unconstrained optimization problem if we use a particular parameterization of the orthogonal matrix $Q$. We discuss methods for solving such an unconstrained optimization problem in this section. In particular, we show how the gradient required in a first-order numerical optimization method can be computed.

Any orthogonal matrix can be parameterized as a matrix exponential, $Q=e^B$, where $B$ is a skew-symmetric matrix, i.e., $B = -B^{\top}$. If we collect all matrix elements in the strictly lower triangular part of $B$ into a vector $s$, and use $B(s)$ to represent the corresponding skew-symmetric matrix, then \eqref{eq:opt_constrained} can be formulated as an unconstrained optimization problem.

Substituting $Q=e^{B(s)}$ into \eqref{eq:g_def} yields the objective function 
\begin{equation}
g(s) \equiv f(e^{B(s)}).
\end{equation}

It follows from the chain rule that the gradient of $g(s)$ can be evaluated as 
\begin{equation}
\nabla g(s) = \frac{\partial f(Q)}{\partial Q} \cdot \frac{\partial Q}{\partial s}.
\end{equation}

\begin{comment}
\jw{cut? $h$ is not referenced after this part

\begin{equation}
\min_{s} \sum_{i=1}^m \phi\left(\sigma_i(\bfA{e^{B(s)}X})\right) = \min_{s} \mbox{trace} \left[ \phi\left(\bfA{e^{B(s)} X}\bfA{e^{B(s)}X}^{\top} \right) \right].
\label{eq:freeQtrace}
\end{equation} 

To derive an expression for the gradient of the objective in \eqref{eq:freeQtrace}, we rewrite the objective function as 
\begin{equation}
    g(s) = \sum_{i=1}^m \phi(\sigma_i) = h\circ \sigma \left(\bfA{e^{B(s)}X} \right),
    \label{eq:obj_s}
\end{equation}
where $\sigma(C)$ is a function that maps the matrix $C$ to a vector of its singular values
$(\sigma_1,\sigma_2,...,\sigma_m)$, where $\sigma_1 \geq \sigma_2 \geq \cdots \sigma_m$. The function $h(x):\mathbb{R}^m \rightarrow \mathbb{R}$ is defined as $h(x) = \sum_i \phi(x_i)$, for some $x\in \mathbb{R}^m$ where $x_i$ is the $i$-th component of $x$. We use $g(s) = h \circ \sigma(\bfA{e^{B(s)}X})$ to denote the composition of functions $h$ and $\sigma$. 

It follows from the chain rule that the gradient of $g(s)$ can be evaluated as 
\begin{equation}
\nabla g(s) = \frac{\partial g(s)}{\partial Q} \cdot \frac{\partial Q}{\partial s},
\end{equation}
where $Q=e^{B(s)}$.
}
\end{comment}
The derivative of $Q$ with respect to $s$ can be derived from a perturbative expansion of $e^{B+\Delta}$ for some $\Delta$. Because the directional derivative of $e^{B}$ along $\Delta$ contains nested commutators of $B$ and $\Delta$, the derivative does not have a simple and closed form, except in the case $s=0$ in which it is simply the identity matrix.

The derivative of $f(Q)$ with respect to $Q$ can be evaluated by a closed-form expression given in the following 
theorem, which can also be found in~\cite{lese:2005b}.
This expression makes use of the singular value decomposition of $\bfA{QX}$.

\begin{theorem} \label{thm:euclidean_grad} 
    If $\bfA{QX} = U(Q) \Sigma(Q) V(Q)^{\top}$ is the singular value decomposition, then the gradient of $f$ in \eqref{eq:g_def} with respect to $Q$ is 
    \begin{equation} \label{eq:euclidean_grad}
    \nabla_Q f(Q) = {\bf A}^{-1}\left(U\phi'(\Sigma) V^{\top} \right) X^{\top}, 
    \end{equation}
    where the derivative $\phi'$ of $\phi$ is applied entry-wise to $\Sigma$. 
\end{theorem}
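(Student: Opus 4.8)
The plan is to combine a known formula for the Euclidean gradient of a spectral function of a matrix with the chain rule through the linear maps $Q \mapsto QX \mapsto \bfA{QX}$. Write $F(M) = \sum_{i=1}^m \phi(\sigma_i(M))$, so that $f(Q) = F(\bfA{QX})$ with $M = \bfA{QX} \in \mathbb{R}^{lc\times rb}$. Since $\textbf{A}$ is linear and $X$ is fixed, the differential of $M$ in a direction $dQ$ is simply $dM = \bfA{dQ\,X}$, and the whole computation reduces to two pieces: (i) computing the Euclidean gradient $\nabla_M F$ of the outer spectral function, and (ii) transporting it back to $Q$-space.

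For step (i), I would first treat the generic case of simple singular values. Writing $M = U\Sigma V^\top$ with columns $u_i, v_i$, the standard first-order perturbation formula for singular values gives $d\sigma_i = u_i^\top (dM)\, v_i$. Hence $dF = \sum_i \phi'(\sigma_i)\, u_i^\top (dM)\, v_i = \mathrm{tr}\big((dM)\textstyle\sum_i \phi'(\sigma_i)\, v_i u_i^\top\big) = \langle U\phi'(\Sigma)V^\top,\, dM\rangle$, where $\langle A,B\rangle = \mathrm{tr}(A^\top B)$ is the Frobenius inner product. This identifies $\nabla_M F = U\phi'(\Sigma)V^\top$, with $\phi'$ applied entrywise along the diagonal of $\Sigma$.

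For step (ii), the differential of $f$ is $df = \langle \nabla_M F,\, \bfA{dQ\,X}\rangle$. Because $\textbf{A}$ is a unitary (orthogonal) operator on matrix space, its adjoint equals its inverse, so $\langle \nabla_M F,\, \bfA{dQ\,X}\rangle = \langle {\bf A}^{-1}(\nabla_M F),\, dQ\,X\rangle$. A cyclic trace manipulation, $\langle H,\, dQ\,X\rangle = \mathrm{tr}(H^\top dQ\,X) = \mathrm{tr}\big((H X^\top)^\top dQ\big) = \langle H X^\top,\, dQ\rangle$ with $H = {\bf A}^{-1}(\nabla_M F)$, then extracts the factor $X^\top$ and yields $\nabla_Q f = {\bf A}^{-1}(U\phi'(\Sigma)V^\top)\,X^\top$, which is the claimed formula \eqref{eq:euclidean_grad}.

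The main obstacle is the differentiability assumed in step (i): individual singular values fail to be differentiable where they coincide, so the naive perturbation argument only applies when the $\sigma_i$ are distinct. I would handle the general case by appealing to the theory of orthogonally invariant spectral functions: since $h(x)=\sum_i \phi(x_i)$ is symmetric (and absolutely symmetric once the sign symmetry of singular values is accounted for), the differentiability result for $F = h\circ\sigma$ guarantees that $F$ is differentiable wherever $\phi$ is differentiable and that its gradient retains the form $U\,\mathrm{diag}(\phi'(\sigma))\,V^\top$ even at points with repeated singular values. The remaining care is to verify this absolute-symmetry hypothesis and, when $\phi$ is only defined on $[0,\infty)$ (as for $\phi_\mu$ or $t^{2\alpha}$), to check differentiability at $\sigma_i = 0$; away from such points the formula follows directly from the perturbation computation above.
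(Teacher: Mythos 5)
Your proposal is correct and follows essentially the same route as the paper's proof: the first-order perturbation identity $d\sigma_i = u_i^{\top}(dM)\,v_i$ (which the paper derives by differentiating the SVD of $\mathbf{A}(QX)$ and sandwiching with $U^{\top}$ and $V$), followed by the same Frobenius inner-product and cyclic-trace manipulation using that $\mathbf{A}$ is unitary so its adjoint is its inverse. Your closing discussion of repeated singular values addresses a point the paper defers to a remark after the theorem (via L\"{o}wner-operator theory and the Clarke subdifferential), so it is a welcome refinement rather than a different method.
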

\begin{proof}
   We begin by taking the directional derivative of \eqref{eq:g_def} in the direction $E \in \mathbb{R}^{lr \times lr}$ and applying the chain rule to obtain 
   \begin{equation} \label{eq:g_diff}
   \begin{aligned}
       Df(Q)[E] &= \sum_{i=1}^m \phi'\left(\Sigma_{ii}\right) D\Sigma_{ii}(Q)[E] \\
       &= \mbox{trace}\left(\phi'(\Sigma) D \Sigma(Q)[E]\right), 
   \end{aligned}
   \end{equation}
   where the second equality holds since $\phi'(\Sigma)$ is diagonal. 
   Next, we obtain an expression for the directional derivative $D\Sigma(Q)[E]$. To do so, we differentiate $\bfA{QX}$ 
   \begin{equation}
   \begin{aligned}
       D \bfA{QX}[E] &= \bfA{EX} \\
       &= \left(DU(Q)[E]\right) \Sigma V^{\top} + U \left(D\Sigma(Q)[E]\right) V^{\top} + U\Sigma \left(DV(Q)^{\top}[E]\right),
    \end{aligned}
   \end{equation}
   where we used the fact that ${\bf A}$ is linear to obtain the first equality and the SVD representation of $\bfA{QX}$ with the product rule to obtain the second equality. Multiplying the preceding equation on the left by $U^{\top}$ and on the right by $V$ we obtain 
   \begin{equation} \label{eq:sigma_diff}
    D \Sigma(Q)[E] = U^{\top} \bfA{EX} V,
   \end{equation}
   where we used the fact that $U$ and $V$ have orthonormal columns and that $U^{\top}\left(DU(Q)[E]\right) =0$ and $\left(DV(Q)^{\top}[E]\right)V=0$. 
   Substituting \eqref{eq:sigma_diff} into \eqref{eq:g_diff} we obtain 
   \begin{equation} \label{eq:cyclic_trace}
   \begin{aligned}
       Df(Q)[E] &= \mbox{trace}\left(\phi'(\Sigma) U^{\top} \bfA{EX}V\right) \\
       &= \mbox{trace}\left(V \phi'(\Sigma) U^{\top} \bfA{EX}\right) \\
       &= \left\langle U\phi'(\Sigma)V^{\top}, \bfA{EX} \right\rangle \\
       &= \left\langle {\bf A}^{-1}\left( U\phi'(\Sigma)V^{\top}\right), EX \right\rangle \\
       &= \left\langle {\bf A}^{-1}\left( U\phi'(\Sigma)V^{\top}\right)X^{\top}, E \right\rangle
    \end{aligned}
   \end{equation}
   where we used the matrix inner product defined as $\langle X,Y\rangle=\mbox{trace}(X^{\top}Y)$ for any matrices $X,Y$ of the same dimension. The gradient $\nabla_Q f(Q)$ is the unique matrix satisfying $Df(Q)[E] = \langle \nabla_Q f(Q),E\rangle$ for all matrices $E$ of appropriate dimension. Comparing this with the last line in \eqref{eq:cyclic_trace} completes the proof. 
\end{proof}

\noindent 
The following algorithm summarizes the main steps for computing such a gradient, which will be utilized in numerical optimization algorithms for tensor disentanglement described in the following sections. 
\begin{algorithm} \label{alg:Euclidean_grad}
 \caption{Compute Euclidean gradient of disentangle cost function} 
\begin{algorithmic}[1]
\Require 
\Statex $X \in \mathbb{R}^{lr \times bc}$ $\rightarrow$ flattening of tensor ${\cal X}$ to be disentangled
\Statex $Q \in O(lr)$ $\rightarrow$ disentangler at current iteration
\Ensure 
$\nabla_Q f(Q)$ $\rightarrow$ Euclidean gradient of $f$ at $Q$
\State $Y = \bfA{QX}$
\State $[U,\Sigma,V] = \texttt{svd}(Y)$ 
\State $\nabla_Q f(Q) = {\bf A}^{-1}(U\phi'(\Sigma)V^{\top})X^{\top}$ 
\end{algorithmic}
\end{algorithm}

\noindent
We estimate the computational complexity of Algorithm~\ref{alg:Euclidean_grad} assuming $lc \leq rb$ for simplicity. 
\begin{enumerate}[label={Step \arabic*:}, leftmargin=*]
\item Compute the matrix product $QX$, where $Q \in \mathbb{R}^{lr \times lr}$ and $X \in \mathbb{R}^{lr \times bc}$. This costs $\mathcal{O}((lr)^2 bc)$ FLOPs. The result is reshaped by the operator ${\bf A}$ into a matrix $Y \in \mathbb{R}^{lc \times rb}$.

\item Perform an SVD of $Y \in \mathbb{R}^{lc \times rb}$. Since $lc \leq rb$, the cost is $\mathcal{O}((lc)^2 rb)$ FLOPs.

\item Apply the function $\phi'$ to the diagonal entries of $\Sigma$, and compute $U \phi'(\Sigma) V^\top$, which costs $\mathcal{O}((lc)^2 rb)$ FLOPs. Then, apply ${\bf A}^{-1}$ to reshape the result back to $\mathbb{R}^{lr \times bc}$ and multiply with $X^\top \in \mathbb{R}^{bc \times lr}$, costing $\mathcal{O}((lr)^2 bc)$ FLOPs.
\end{enumerate}
Adding the costs from each step, the total computational complexity is
$\mathcal{O}((lr)^2 bc + (lc)^2 rb)$.

We should note that the objective function defined in \eqref{eq:opt_constrained} is a function of a L\"{o}wner operator~\cite{lowner}, which generates a matrix-valued
 function via applying a single-variable function to each of the singular values of a matrix. Properties of L\"{o}wner operators, including the differentiation of such operators, have been well studied in~\cite{dsst:2018,lese:2005a,lese:2005b}. The gradient formula given in \eqref{eq:euclidean_grad} can also be derived directly from the results shown in these works. In particular, the case in which the singular value $\sigma_k$ has a multiplicity greater than 1 is considered in~\cite{lese:2005b}. In that case, $f(Q)$ is not differentiable and a Clark subdifferential can be used in a non-smooth optimization algorithm. 

The Euclidean gradient expression given in \eqref{eq:euclidean_grad} can be used in a first-order unconstrained optimization method. Starting from $s=0$, we can use the gradient to update $s$. Using the updated $s$, we can update $X$ as $X \leftarrow QX$, where $Q=e^{B(s)}$. Applying $Q$ to $X$ is equivalent to resetting $s$ to 0. Hence, in the next unconstrained optimization step, we can use \eqref{eq:euclidean_grad} again to compute the gradient and follow the steepest descent direction. This type of optimization is more conveniently formulated as a Riemannian optimization problem as we describe in the next section.

\section{Riemannian optimization}
\label{sec:riemannian}
Let $lr = n$. If we focus on the objective function $f(Q)$ referred to in Theorem~\ref{thm:euclidean_grad} and restrict $Q$ to the embedded Riemannian submanifold $O(n)$ of $\mathbb{R}^{n \times n}$ consisting of all orthogonal matrices, we can formulate the disentangling problem \eqref{eq:opt_constrained} as a Riemannian optimization and solve it using first- and second-order Riemannian optimization methods \cite{Absil2009,boumal2023introduction}. The idea of such methods is to traverse the manifold until we find a (local) solution to the optimization problem \eqref{eq:opt_constrained}. To traverse the manifold, we need a notion of direction which is provided by the tangent spaces. Recall that at each point $Q \in O(n)$ the tangent space of the manifold $O(n)$ is 
\begin{equation} \label{eq:tangent_space}
T_Q O(n) = \{ Q\Omega \in \mathbb{R}^{n\times n}: \Omega \in \textrm{Skew}(n)\},
\end{equation}
where $\textrm{Skew}(n)$ represents the set of all $n\times n$ skew symmetric matrices~\cite{boumal2023introduction}. The tangent space \eqref{eq:tangent_space} is a vector space that linearizes the manifold $O(n)$ around $Q$. We consider $O(n)$ as an embedded Riemannian submanifold of $\mathbb{R}^{n \times n}$, which means that every tangent space inherits an inner product $(\cdot,\cdot)$ from $\mathbb{R}^{n \times n}$. The collection of all such inner products defines a Riemannian metric, which allows us to adopt the notions of gradients and Hessians to the manifold $O(n)$. Effective numerical optimization on Riemannian manifolds relies on the efficient computation of Riemannian gradients and Hessians. A straightforward method for obtaining Riemannian gradients and Hessians on $O(n)$ utilizes the orthogonal projector $\textrm{Proj}_Q:\mathbb{R}^{n\times n} \to T_Q O(n)$ onto the tangent space \eqref{eq:tangent_space}, defined by 
\begin{equation}
\textrm{Proj}_Q(Z) =\frac{Z - Q Z^{\top} Q}{2}.
\label{eq:orthoproj}
\end{equation}
Hereafter we derive expressions for the Riemannian gradient and Hessian of the disentangler cost function \eqref{eq:g_def} and discuss their computational costs. Tensor diagrams of these expressions, as well as a derivation for complex manifolds, can be found in~\cite{sappler2024}.

\subsection{Riemannian gradient} \label{sec:riemannian_grad}
The Riemannian gradient of $f$ at $Q$, denoted by $\textrm{grad} f(Q)$, belongs to the tangent space $T_Q O(n)$ and points in the direction of steepest ascent of the cost function $f$ on the manifold $O(n)$. To obtain $\textrm{grad}f(Q)$ we project the Euclidean gradient $\nabla_Q f(Q)$, defined by \eqref{eq:euclidean_grad} in Theorem \ref{thm:euclidean_grad}, orthogonally onto the tangent space \eqref{eq:tangent_space}. 
Applying such projection to $\nabla_Q f(Q)$ yields the Riemannian gradient 
\begin{equation} \label{eq:riemannian_grad} 
\begin{aligned}
\textrm{grad} f(Q) &= \textrm{Proj}_Q(\nabla_Q f(Q)) 
\end{aligned}
\end{equation}
To compute the Riemannian gradient we first compute the Euclidean gradient using Algorithm~\ref{alg:Euclidean_grad} and then apply the orthogonal projector \eqref{eq:orthoproj} onto the tangent space. With the Riemannian gradient available, we can solve the disentangler problem \eqref{eq:opt_constrained} using first-order optimization algorithms on the manifold $O(n)$, such as gradient descent. 

The computational cost of the Riemannian gradient can be broken into two parts. First, there is the cost of the Euclidean gradient, which scales as 
$\mathcal{O}((lr)^2bc + (lc)^2 rb)$ as discussed in Section~\ref{sec:unconstrained}. Second, there is the cost of the orthogonal projection onto the tangent space defined in \eqref{eq:riemannian_grad}. The dominant cost in the projection is matrix multiplication between matrices of size $lr \times lr$, which scales as $\mathcal{O}((lr)^3)$. Thus the total cost of computing the Riemannian gradient scales as $\mathcal{O}((lr)^3 + (lr)^2bc + (lc)^2 rb)$. 
To perform one iteration of gradient descent, one computes the Riemannian gradient, chooses a step-size, and then applies a retraction mapping that moves the current iterate on the manifold $O(n)$ in the descent direction. 

\subsection{Riemannian Hessian} 

The Riemannian Hessian of $f$ at $Q$, denoted by $\textrm{Hess}f(Q)$, is a symmetric linear map acting on the tangent space $T_QO(n)$ 
\begin{equation}
\textrm{Hess} f(Q): T_Q O(n) \rightarrow T_Q O(n). 
\end{equation}
Analogous to the Euclidean Hessian, the Riemannian Hessian characterizes how the Riemannian gradient varies as $Q$ varies along a tangent direction $E \in T_Q O(n)$. For the Riemannian embedded submanifold $O(n)$, the Riemannian Hessian can be computed by first computing the directional derivative of $\textrm{grad}f$ in the direction $E$ and then projecting the result onto the tangent space $T_Q O(n)$ using \eqref{eq:orthoproj} \cite{boumal2023introduction}. Here, $\textrm{grad}f$ is a map which takes $Q \in O(n)$ to the Riemannian gradient \eqref{eq:riemannian_grad}. 

Taking the differential of $\textrm{grad}f$ at $Q$ defined by \eqref{eq:riemannian_grad} in the direction $E \in T_Q O(n)$ and using linearity of the differential we obtain 
\begin{equation} \label{eq:differential_of_Rgrad}
D\textrm{grad}f(Q)[E] = D \textrm{Proj}_Q(\nabla_Q f(Q)) [E] = \frac{ D \nabla_Q f(Q)[E] - D\left(Q\nabla_Qf(Q)^{\top}Q\right)[E]}{2}. 
\end{equation}
This first term in the numerator is the differential of the Euclidean gradient $\nabla_Qf(Q)$. Applying the product rule to the second term in the numerator of \eqref{eq:differential_of_Rgrad} we have
\begin{align} 
D\left(Q \nabla_Q f(Q)^{\top} Q\right)[E] &= E \nabla_Q f(Q)^{\top} Q + Q \left(D\nabla_Qf(Q)[E]^{\top}\right) Q + Q \nabla_Q f(Q)^{\top} E, 
\end{align}
which also requires the differential of the Euclidean gradient $\nabla_Qf(Q)$. The differential of the Euclidean gradient $\nabla_Qf(Q)$ (given in \eqref{eq:euclidean_grad}) can be expressed as 
\begin{equation} \label{eq:diff_prod_rule}
\begin{aligned}
D \nabla_Q f(Q) [E] &= D \left({\bf A}^{-1}\left(U \phi'(\Sigma) V^{\top}\right)X^{\top}\right)[E] \\
&= {\bf A}^{-1}\left(D U[E] \phi'(\Sigma) V^{\top} +  U D\phi'(\Sigma)[E]V^{\top} + U\phi'(\Sigma)DV^{\top}[E]\right)X^{\top},
\end{aligned}
\end{equation}
where we used the fact that ${\bf A}^{-1}$ is linear and the product rule. The differentials of $U$, $\phi'(\Sigma)$ and $V$ in \eqref{eq:diff_prod_rule} we obtain using known results for differentiating the singular value decomposition \cite{townsend}. Using the fact that $D\bfA{QX}[E]=\bfA{EX}$ we have 
\begin{equation} \label{eq:svd_diff}
\begin{aligned}
DU[E] &= U\left[F \odot \left(U^{\top} \bfA{EX} V\Sigma + \Sigma V^{\top} \bfA{EX}^{\top} U\right)\right] + \left(I_{lc} - UU^{\top}\right)\bfA{EX} V \Sigma^{-1} \\ 
D\phi'(\Sigma)[E] &= \phi''(\Sigma)  D\Sigma [E] = \phi''(\Sigma) U^{\top} \bfA{EX}V \\ 
DV[E] &= V\left[F \odot \left(\Sigma U^{\top} \bfA{EX}V + V^{\top}\bfA{EX}^{\top} U\Sigma\right) \right] + \left(I_{rb} - VV^{\top}\right) \bfA{EX}^{\top} U\Sigma^{-1}, \\ 
\end{aligned}
\end{equation} 
where 
\begin{equation} \label{eq:F_def}
F_{ij} = \begin{cases}
         \displaystyle\frac{1}{\sigma_j^2 - \sigma_i^2}, \qquad i\neq j\\
         0, \qquad i = j, 
         \end{cases}
\end{equation}
$\odot$ denotes the Hadamard product and $I_m$ denotes the $m \times m$ identity matrix. 
Equations \eqref{eq:differential_of_Rgrad}-\eqref{eq:F_def} provide a closed form expression for the differential $D \nabla_Q f(Q)[E]$ of the Euclidean gradient $\nabla_Q f(Q)$. The Riemannian Hessian of $f$ at $Q$ in the direction $E$ is now easily obtained by projecting the differential $D \textrm{grad} f(Q)[E]$ orthogonally onto the tangent space 
\begin{equation} \label{eq:hessian}
\textrm{Hess} f(Q)[E] = \textrm{Proj}_Q \left(D\textrm{grad}f(Q)[E]\right). 
\end{equation}
We summarize the main steps for computing the Riemannian Hessian in the following algorithm. 
\begin{algorithm} \label{alg:Riemannian_hess}
 \caption{Compute Riemannian Hessian of disentangle cost function} 
\begin{algorithmic}[1]
\Require 
\Statex $X \in \mathbb{R}^{lr \times bc}$ $\rightarrow$ flattening of tensor ${\cal X}$ to be disentangled
\Statex $Q \in O(lr)$ $\rightarrow$ disentangler at current iteration
\Statex $E \in T_Q O(lr)$ $\rightarrow$ tangent direction along which the Hessian is evaluated
\Ensure 
$\textrm{Hess} f(Q)[E]$ $\rightarrow$ Riemannian Hessian at $Q$ in the direction $E$
\State $Y = \bfA{QX}$
\State $[U,\Sigma,V] = \texttt{svd}(Y)$ 
\State $\nabla_Q f(Q) = {\bf A}^{-1}(U\phi'(\Sigma)V^{\top})X^{\top}$ \Comment{Euclidean gradient} 
\State $\textrm{grad}f(Q)=\textrm{Proj}_Q\left(\nabla_Q f(Q)\right)$ \Comment{Riemannian gradient}
\State Evaluate $F$ in \eqref{eq:F_def}
\State Evaluate $DU[E], D\phi'(\Sigma)[E], DV[E]$ in \eqref{eq:svd_diff} 
\State Evaluate $D\nabla_Qf(Q)[E]$ using \eqref{eq:diff_prod_rule}
\State Evaluate $D \textrm{grad}f(Q)[E]$ using \eqref{eq:differential_of_Rgrad} 
\State $\textrm{Hess} f(Q)[E] = \textrm{Proj}_Q \left(D\textrm{grad}f(Q)[E]\right)$ 
\end{algorithmic}
\end{algorithm}

\noindent
We estimate the computational complexity of Algorithm~\ref{alg:Riemannian_hess} assuming $lc \leq rb$ for simplicity. 
\begin{enumerate}[label={Step \arabic*:}, leftmargin=*]
\item[Steps 1--4:] Compute the Riemannian gradient costing $\mathcal{O}((lr)^3 + (lr)^2bc + (lc)^2 rb)$, as discussed in Section~\ref{sec:riemannian_grad}.

\item[Step 5:] The cost of the $lc(lc-1)$ non-zero elements of $F\in \mathbb{R}^{lc \times lc}$ is $\mathcal{O}((lc)^2)$ which is negligible. 

\item[Step 6:] First compute ${\bf A}(EX)$, where $E \in \mathbb{R}^{lr \times lr}$ and $X \in \mathbb{R}^{lr \times bc}$, which costs $\mathcal{O}((lr)^2 bc)$. Then compute $U^{\top}{\bf A}(EX)V$, where $U \in \mathbb{R}^{lc \times lc}$, ${\bf A}(EX) \in \mathbb{R}^{lc \times rb}$, and $V \in \mathbb{R}^{rb \times lc}$, which costs $\mathcal{O}((lc)^2rb)$, and $V^{\top}{\bf A}(EX)^{\top} U$, which costs  $\mathcal{O}((lc)^3)$. Diagonal scaling by $\Sigma, \phi''(\Sigma)\in \mathbb{R}^{lc \times lc}$ and Hadamard product with $F$ have a negligible cost of $\mathcal{O}((lc)^2)$. The total cost for $D\phi'(\Sigma)[E]$ and the first term in $DU[E], DV[E]$ is $\mathcal{O}((lc)^3 + (lr)^2bc + (lc)^2rb)$. Finally $\left(I_{lc} - UU^{\top}\right)\bfA{EX} V \Sigma^{-1}$ costs $\mathcal{O}((lc)^3 + (lc)^2rb)$ and $\left(I_{rb} - VV^{\top}\right) \bfA{EX}^{\top} U\Sigma^{-1}$ costs $\mathcal{O}((lr)^2bc + (lc)^2rb)$. The total cost in Step 6 is $\mathcal{O}((lc)^3 + (lr)^2bc + (lc)^2rb)$. 

\item[Step 7:] The dominant costs are from multiplication involving matrices $U, DU[E], D\phi'(\Sigma)[E] \in \mathbb{R}^{lc \times lc}$ and $V, DV[E] \in \mathbb{R}^{lc \times rb}$, costing $\mathcal{O}((lc)^2rb)$, and multiplication of ${\bf A}^{-1}(\bullet) \in \mathbb{R}^{lr \times bc}$ with $X^{\top} \in \mathbb{R}^{bc \times lr}$, costing $\mathcal{O}((lr)^2bc)$. Diagonal scaling with $\phi'(\Sigma)$ has negligible cost. The total cost for Step 7 is thus $\mathcal{O}((lc)^2rb + (lr)^2bc)$.

\item[Step 8:] The difference of two matrices with size $lr \times lr$ and rescaling by $1/2$ has cost $\mathcal{O}((lr)^2)$, which is negligible. 

\item[Step 9:] The dominant cost in the orthogonal tangent space projection is matrix multiplication between matrices of size $lr \times lr$, which scales as $\mathcal{O}((lr)^3)$. 

\end{enumerate}
Adding the costs from each step, the total computational complexity for the Riemannian Hessian is $\mathcal{O}((lr)^3 + (lc)^3 + (lr)^2bc + (lc)^2 rb)$.

\subsection{Riemannian nonlinear conjugate gradient and trust-region Newton methods}
The Riemannian gradient defined in \eqref{eq:riemannian_grad} can be used in a Riemannian nonlinear conjugate gradient (RCG) algorithm to obtain an optimal disentangler $Q^*$.  In an RCG algorithm, the approximate $Q$ matrix is updated as
\begin{equation}
Q^{(j+1)} = \mathcal{R}(Q^{(j)},t^{(j)} \Gamma^{(j)}),
\end{equation}
where $\Gamma^{(j)}$ is a Riemannian search direction defined at the $j$-th approximation $Q^{(j)}$, $t^{(j)}$ is an appropriate step length, and $\mathcal{R}$ denotes a retraction operation that brings $Q^{(j)} + t^{(j)}\Gamma^{(j)}$, which is the update of $Q^{(j)}$ on the tangent space, back to the Riemannian manifold $O(n)$.

In RCG, a conjugate search direction $\Gamma$ is updated as
\begin{equation}
\Gamma^{(j)} = -{\rm grad}f(Q^{(j)}) + \beta^{(j)} \mathcal{T}(\Gamma^{(j-1)}),
\end{equation}
where $\mathcal{T}$ denotes a parallel transport operator that maps a search direction in $T_{Q^{(j-1)}}O(n)$ to a search direction in $T_{Q^{(j)}}O(n)$, and the parameter $\beta^{(j)}$ is chosen to ensure that $\Gamma^{(j)}$ is approximately ``conjugate" to the previous search direction with respect to the Hessian. In RCG, $t^{(j)}$ can be updated via an adaptive line search along the manifold~\cite{Sato2022}, while $\beta^{(j)}$ can be determined by the Riemannian generalizations of the Fletcher-Reeves~\cite{fletcher_reeves},  Polak-Ribi\`ere-Polyak~\cite{polak_ribiere,polyakConjugateGradientMethod1969}, Hestenes-Stiefel~\cite{hestenesMethodsConjugateGradients}, and other methods~\cite{daiyuan99,liustorey91}. The basic steps of an RCG algorithm is given in Algorithm~\ref{alg:rcg}. We refer readers to~\cite{Sato2022} for details on how $t^{(j)}$ and $\beta^{(j)}$ are computed, and how the parallel transport $\mathcal{T}$ and the retraction $\mathcal{R}$ are performed. 
\begin{algorithm} \label{alg:rcg}
 \caption{A Riemannian conjugate gradient (RCG) algorithm.} 
\begin{algorithmic}[1]
\Require 
\Statex $Q^{(0)} \in O(n) \rightarrow$ initial disentangler
\Statex $\texttt{maxiter} \rightarrow$ maximum number of iterations allowed 
\Statex $\epsilon \rightarrow$ convergence tolerance 
\Ensure $Q^*$ is a local minimizer of $f(Q)$ in $O(n)$.

\State $j=0$
\State $\Gamma^{(0)} = {\rm grad} f(Q^{(0)})$
\State converged = false
\While{(Not converged and $j < \texttt{maxiter}$)} 
   \State Compute a step length $t^{(j)} > 0$
   \State $Q^{(j+1)} \leftarrow \mathcal{R}(Q^{(j)},t^{(j)}\Gamma^{(j)})$
   \State Compute $G^{(j+1)} = {\rm grad}f(Q^{(j+1)})$
   \If {($\|G^{(j+1)}\| \leq \epsilon$)}
      \State converged = true
      \State $Q^* = Q^{(j+1)}$
   \Else
      \State Compute $\beta^{(j+1)}$
      \State $\Gamma^{(j+1)} \leftarrow -G^{(j+1)} + \beta^{(j+1)} \mathcal{T}^{(j)}(\Gamma^{(j)})$ 
      where $\mathcal{T}^{(j)}:T_{Q^{(j)}}\rightarrow T_{Q^{(j+1)}}$ is a parallel transport operator
   \EndIf
\State $j \leftarrow j + 1$
\EndWhile
\end{algorithmic}
\end{algorithm}

With the Riemannian Hessian available, we can use second-order Riemannian optimization methods. In step $j$ of these methods, one can move along a Newton search direction $\Gamma^{(j)}$ that satisfies the Newton correction equation 
\begin{equation}
\textrm{Hess} f(Q^{(j)})[\Gamma^{(j)}] = -\textrm{grad} f(Q^{(j)}).
\label{eq:newtoneq}
\end{equation}
This is followed by a retraction step that places the new approximation back onto the manifold $O(n)$~\cite{boumal2023introduction}.

Note that the search direction $\Gamma^{(j)}$ is within the tangent space associated with $O(n)$ at the $j$-th approximation $Q^{(j)}$. If $\textrm{Hess} f(Q^{(j)})$ is positive definite, $\Gamma^{(j)}$ is guaranteed to be a descent direction. Taking the full Newton step yields the minimizer of the local quadratic approximation of $f(Q)$ over all $E \in T_{Q^{(j)}} O(n)$, which is defined as
\begin{equation}
m_{Q^{(j)}}(\mathcal{R}(E)) = f(Q^{(j)}) + \langle \textrm{grad}f(Q^{(j)}), E \rangle + \frac{1}{2}\langle \textrm{Hess} f(Q^{(j)})[E], E \rangle. 
\label{eq:quadmodel}
\end{equation}

However, if the Riemannian Hessian is not positive definite but non-singular, the convergence of a naive implementation of the Newton's method can be problematic unless the initial guess of the minimizer is sufficiently close to the solution.

One way to address convergence failure is to modify the Hessian by replacing it with a positive-definite operator. One widely used technique is to simply replace $\textrm{Hess} f(Q^{(j)})[E]$ on the left-hand side of \eqref{eq:newtoneq} with 
\begin{equation}
\textrm{Hess} f(Q^{(j)})[E] + \eta E,
\label{eq:regHess}
\end{equation}
where $\eta$ is an appropriately chosen \textit{regularization} parameter to make the operator on the left-hand side of \eqref{eq:newtoneq} positive definite~\cite{CGTbook00}.

Because $m_{Q^{(j)}}(\mathcal{R}(E))$ is a local approximation of $f(Q)$ on the tangent space defined at $Q^{(j)}$, taking a full Newton step may not be appropriate when the minimizer of $m$ is far from $Q^{(j)}$. One way to address this issue is to impose an additional constraint that defines a small region around $Q^{(j)}$ in which $m_{Q^{(j)}}(\mathcal{R}(E))$ can be trusted to be a good approximation of $f(Q)$. The search direction is obtained as the solution of the \textit{trust-region} subproblem
\begin{equation}
\min_{\|E\| \leq \Delta} m_{Q^{(j)}}(\mathcal{R}(E)),
\label{eq:trsubprob}
\end{equation}
where $\Delta$ is a trust-region radius that is adjusted dynamically based on the ratio of the actual reduction in $f(Q)$ over the reduction in $m_{Q^{(j)}}$ when a optimal solution of \eqref{eq:trsubprob} is used to update $Q^{(j)}$.

Because $\textrm{Hess} f(Q^{(j)})$ is only available through its multiplication with a search direction $E$ as defined in \eqref{eq:differential_of_Rgrad}--\eqref{eq:hessian}, the Newton correction equation or the trust-region subproblem is often solved iteratively using a modified conjugate gradient method~\cite{steihaug_cg}. We refer readers to~\cite{boumal2023introduction} on the algorithmic details of how these problems are solved in a trust-region Newton's method. The major steps of a Riemannian trust-region Newton's (RTRN) method are given in Algorithm~\ref{alg:rtr-Newton}.
\begin{algorithm}[htbp] \label{alg:rtr-Newton}
 \caption{A Riemannian trust-region Newton (RTRN) algorithm.} 
\begin{algorithmic}[1]
\Require 
\Statex $Q^{(0)} \in O(n) \rightarrow$ initial disentangler
\Statex $\Delta_{\max} \rightarrow$ maximum trust-region radius 
\Statex $\Delta_0 \in (0,\Delta_{\max}) \rightarrow$ initial trust-region radius 
\Statex $\texttt{maxiter} \rightarrow$ maximum number of iterations allowed 
\Statex $r_{\min} \rightarrow$ minimum reduction ratio 
\Statex $\epsilon \rightarrow$ convergence tolerance 
\Ensure $Q^*$ is a local minimizer of $f(Q)$ in $O(n)$.

\State $j=0$
\State $\Delta = \Delta_0$
\State converged = false
\While{(Not converged and $j < \texttt{maxiter}$)} 
   \State Solve \eqref{eq:trsubprob} and set the tentative next approximation to $\tilde{Q}^{(j+1)} = \mathcal{R}(E_{\textrm{opt}})$
   \State Compute ratio $r = \frac{f(\tilde{Q}^{(j+1)})-f(Q^{(j)})}{m(\tilde{Q}^{(j+1)})-m(0)}$
   \State Accept or reject $\tilde{Q}^{(j+1)}$:
   \[
   Q^{(j+1)} = \left\{ \begin{array}{ll}
   \tilde{Q}^{(j+1)} & \mbox{if $r > r_{\min}$ (accept),} \\
   Q^{(j)} & \mbox{otherwise (reject)}
   \end{array}
   \right.
   \]
   \State Update the trust region radius:
   \[
   \Delta \leftarrow \left\{ \begin{array}{ll}
   \Delta/4 & \mbox{if $r < \frac{1}{4}$}  \\
   \min(2\Delta,\Delta_{\max}) & \mbox{if $r > \frac{3}{4}$ and $\|E_{\textrm{opt}}\| = \Delta$} \\
   \end{array}
   \right.
   \]
   \If {($\|\textrm{grad}f(Q^{(j+1)})\| \leq \epsilon$)}
      \State converged = true
      \State $Q^* = Q^{(j+1)}$
   \EndIf
\State $j \leftarrow j + 1$
\EndWhile
\end{algorithmic}
\end{algorithm}

\section{Alternating optimization}
\label{sec:altopt}
As an alternative to the gradient-based optimization algorithms for computing the disentangler $Q$ described in the preceding sections, in this section we introduce an alternating least-squares scheme. We consider $\phi$ given in \eqref{eq:Heaviside} and reformulate the optimization problem \eqref{eq:opt_constrained} as 
\begin{equation} \label{eq:opt_on_product_mfld}
\min_{(Q,M_k) \in O(lr) \times \mathbb{R}_k^{lc \times rb}} 
\|  {\bf A} (QX) - M_k \|_F, 
\end{equation}
where $\mathbb{R}_k^{lc \times rb}$ is the manifold of rank-$k$ real matrices with size $lc \times rb$ and $O(lr) \times \mathbb{R}_k^{lc \times rb}$ is a product manifold. 
To see that \eqref{eq:opt_on_product_mfld} is equivalent to \eqref{eq:opt_constrained}, notice that for any $Q$ the optimal rank-$k$ matrix $M_k$ is given by the rank-$k$ truncated SVD of $\bfA{QX}$.  Fixing $M_k$ in \eqref{eq:opt_on_product_mfld} as the optimal truncated SVD yields an optimization problem for the disentangler $Q$ 
\begin{equation}
\label{eq:opt_SVs}
\begin{aligned}
\min_{Q \in O(lr)}\sqrt{ \sum_{i = k+1}^{m} \sigma_i^2(\bfA{QX})}, 
\end{aligned}
\end{equation}
which is equivalent to \eqref{eq:opt_constrained} with $\phi(t)=\phi_{\mu}(t)$ defined in \eqref{eq:Heaviside}. The advantage of considering \eqref{eq:opt_on_product_mfld} is that for fixed $Q$ we have a least-squares problem for $M_k$, and for fixed $M_k$ we have a least-squares problem for $Q$. Both of these least-squares problems have a simple, closed-form solution that is relatively cheap to compute. Alternating between these least-squares problems yields a sequence of disentanglers $Q^{(j)}$ for $j=1,2,\ldots$ that converges to a (local) minimum of \eqref{eq:opt_SVs}.

To describe the alternating algorithm, we begin with a initial disentangler $Q^{(0)}$, e.g., the identity matrix $Q^{(0)} = I_{lr \times lr}$, to use as our zeroth iteration. We obtain iteration $Q^{(j+1)}$ from $Q^{(j)}$ as follows. Fix $Q^{(j)}$ in \eqref{eq:opt_on_product_mfld} and obtain $M_k^{(j)}$ by solving 
\begin{equation}
    M_k^{(j)} = \argmin_{M_k \in \mathbb{R}_k^{lc \times rb}} 
    \left\| {\bf A}\left(Q^{(j)}X\right) - M_k\right\|_F. 
\end{equation}
The preceding optimization problem is solved directly by taking the rank-$k$ truncated SVD $M_k^{(j)} = \texttt{SVD}_k\left({\bf A}\left(Q^{(j)}X\right)\right)$. Then with $M_k^{(j)}$ fixed in \eqref{eq:opt_on_product_mfld} we obtain the updated disentangler $Q^{(j+1)}$ by solving 
\begin{equation}
\label{eq:procrustes_orth0}
    Q^{(j+1)} = \argmin_{Q \in O(lr)} \left\|{\bf A}\left(QX\right) - M_k^{(j)} \right\|_F. 
\end{equation}
To obtain a closed form solution to \eqref{eq:procrustes_orth0}, it is convenient to consider the equivalent problem 
\begin{equation}
\label{eq:procrustes_orth}
     Q^{(j+1)} = \argmin_{Q \in O(lr)} 
     \left\| QX - {\bf A}^{-1} \left(M_k^{(j)}\right)\right\|_F, 
\end{equation}
known as an orthogonal Procrustes problem \cite{Procrustes}, which is solved by the polar decomposition 
\begin{equation}
     Q^{(j+1)} = UV^{\top}. 
\end{equation}
Here $U,V$ are the left, right factors in the SVD of the matrix ${\bf A}^{-1}\left(M_k^{(j)}\right)X^{\top}$. Iterating this procedure generates a sequence of disentanglers $Q^{(j)}$ and rank-$k$ matrices $M_k^{(j)}$ that reduce the objective function \eqref{eq:opt_on_product_mfld} monotonically and hence converge to a (local) minimum of the optimization problem \eqref{eq:opt_on_product_mfld}. 
We remark that the Procrustes problem also appears in a disentangling algorithm that minimizes the Rényi-2 entropy~\cite{hauschildFindingPurificationsMinimal2018}. More broadly, the alternating disentangler is a particular instance of Evenbly-Vidal optimization~\cite{evenblyAlgorithmsEntanglementRenormalization2009}. 

The main steps of the alternating disentangler algorithm are summarized below. To determine when the alternating disentangler algorithm has converged we consider 
\begin{equation} \label{eq:dQ}
    \delta Q^{(j+1)} = \left\| Q^{(j+1)}-Q^{(j)} \right\|_F,
\end{equation}
which measures how much the disentangler has changed from iteration $j$ to $j+1$. Once \eqref{eq:dQ} falls below a user-determined threshold, we say that the alternating algorithm has converged.

\begin{algorithm} \label{alg:alternating}
 \caption{Alternating disentangler algorithm} 
\begin{algorithmic}[1]
\Require 
\Statex $X \in \mathbb{R}^{lr \times bc}$ $\rightarrow$ flattening of tensor ${\cal X}$ to be disentangled
\Statex $Q^{(0)} \in O(lr) \rightarrow$ initial disentangler
\Statex $\texttt{maxiter} \rightarrow$ maximum number of iterations allowed 
\Statex $\epsilon \rightarrow$ convergence tolerance 
\Ensure 
$Q^{*}\in O(lr)$ $\rightarrow$ local solution to \eqref{eq:opt_SVs} 
\State $j=0$
\State converged = false
\While{(Not converged and $j < \texttt{maxiter}$)}
\State $M^{(j)}_k = \texttt{SVD}_k\left(\bfA{Q^{(j)}X}\right)$ 
\State $[U,\Sigma,V] = \texttt{SVD}\left({\bf A}^{-1}\left(M^{(j)}_k\right) X^{\top}\right)$
\State $Q^{(j+1)} = UV^{\top}$
\If {($\left\| Q^{(j+1)}-Q^{(j)} \right\|_F \leq \epsilon$)}
  \State converged = true
  \State $Q^* = Q^{(j+1)}$
\EndIf
\State $j \leftarrow j + 1$
\EndWhile 
\end{algorithmic}
\end{algorithm}

We estimate the computational complexity of a single iteration (Steps 3--5) of the alternating disentangler in Algorithm~\ref{alg:alternating}. 
\begin{enumerate}[label={Step \arabic*:}, start=3, leftmargin=*]
\item Compute the matrix product between $Q^{(j)} \in \mathbb{R}^{lr \times lr}$ and $X \in \mathbb{R}^{lr \times bc}$ costing $\mathcal{O}((lr)^2bc)$. Then compute the rank-$k$ truncated SVD on ${\bf A}(Q^{(j)}X)$ costing $\mathcal{O}(k lc rb)$. 

\item Compute the product of ${\bf A}^{-1}(M_k^{(j)}) \in \mathbb{R}^{lr \times bc}$ and $X^{\top} \in \mathbb{R}^{bc \times lr}$ costing $\mathcal{O}((lr)^2 bc)$. Then compute the SVD of ${\bf A}^{-1}(M_k^{(j)})X^{\top} \in \mathbb{R}^{lr \times lr}$ costing $\mathcal{O}((lr)^3)$. 

\item The disentangler is computed as the matrix product $UV^{\top}$ where $U,V \in \mathbb{R}^{lr \times lr}$ costing $\mathcal{O}((lr)^3)$. 
\end{enumerate}
Adding the costs from each step, the total computational complexity of a single iteration of the alternating disentangler is
$\mathcal{O}((lr)^3 + (lr)^2 bc)$. 

\section{Practical Issues}
\label{sec:practical}
We discuss a few practical issues to be considered in the implementation of the disentangling optimization algorithms. 

\subsection{Rank initialization and determination}
\label{sec:rank}
Minimizing the objective function \eqref{eq:sv_tail} by Riemannian or alternating optimization algorithms requires us to provide a rank parameter $k$ in \eqref{eq:opt_on_product_mfld}. Ideally, we would like $k$ to be as small as possible while keeping the optimized objective function in \eqref{eq:opt_on_product_mfld} sufficiently small. However, such a minimal rank is generally unknown a priori. We will show that it is possible to obtain an estimate of the minimal rank by either counting the
number of degrees of freedom in the disentangled tensor representation of $X$.

Suppose the disentangler $Q$ allows us to achieve 
\begin{equation}
\sum_{i=k+1}^m \sigma_i^2(\mathbf{A}(QX)) = 0,
\label{eq:exactrank}
\end{equation}
for some $k$. This implies we can find a $Q$ parameterized by $e^{B}$, where $B$ is a skew-symmetric matrix such that $M_k = \mathbf{A}(QX)$ is an $lc\times rb$ rank-$k$ matrix.  We can parameterize $M_k$ as the product of $Y$ and $W^{\top}$, where $Y \in \mathbb{R}^{lc\times k}$ and $W \in \mathbb{R}^{rb \times k}$.  Clearly, $Y$, $W$, and $Q$ satisfy the equation
\begin{equation}
Q^{\top}\mathbf{A}^{-1}(YW^{\top}) = X. \label{eq:qywx}
\end{equation}
If the matrix elements in the lower- (or upper-) triangular part of $B$ as well as those in $W$ and $Y$ are treated as unknowns, the solution of \eqref{eq:qywx} may exist if the number of unknowns matches the number of (scalar) equations, which is $lrbc$.

A naive sum of the degrees of freedom on the left-hand side of \eqref{eq:qywx} includes $klc$ and $krb$ degrees of freedom in $Y$ and $W$, respectively, and $lr(lr-1)/2$ degrees of freedom in $B$ (recall $Q = e^{B}$), yielding $k(lc + rb) + \frac{lr(lr-1)}{2}$.   

However, we should note that orthogonal matrices of the form 
\[
Q = Q_l \otimes Q_r, 
\]
where $Q_l$ and $Q_r$ can be represented as 
$e^{B_l}$ and $e^{B_r}$ for some $l\times l$ and $r \times r$ skew-symmetric matrices $B_l$ and $B_r$ respectively, do not contribute to the disentangling of $\mathcal{X}$. Therefore, we should exclude the number of degrees of freedom in $Q_l$ and $Q_r$, which are $l(l-1)/2$ and $r(r-l)/2$ respectively. Furthermore, the choices of $W$ and $Y$ are not unique because $YW^{\top} = YSS^{-1}W^{\top} = \hat{Y} \hat{W}^{\top}$, where $\hat{Y}=YS$ and $\hat{W} = WS^{-{\top}}$, for any non-singular matrix $S$. Hence, we need to remove $k^2$ degrees of freedom from the gauge matrix $S$ also. (For completeness, if $lr > bc$, we have to subtract the additional gauge degrees of freedom corresponding to an orthogonal matrix of dimension $(lr - bc) \times (lr - bc)$; below, we consider $lr \leq bc$ only, which is the case for most applications.)

As a result, when $lr \leq bc$, the total number of degrees of freedom on the left-hand side of \eqref{eq:qywx} is 

\begin{equation}
k(lc + rb) - k^2 + \frac{lr(lr-1)}{2} - \frac{l(l-1)}{2} - \frac{r(r-1)}{2}. 
\label{eq:exactrankdim}
\end{equation}
This number must be at least $lrbc$, which is the number of degrees of freedom in $X$ (and the number of scalar equations in \eqref{eq:qywx}), to ensure \eqref{eq:qywx} can be solved in general.

As an example, let us first consider 
an equal-dimensional $l\times l\times l\times l$ tensor.
To ensure \eqref{eq:qywx} is solvable, we should choose a minimal $k$ to satisfy
\begin{equation}
2l^2k - k^2 + \frac{l^2(l^2-1)}{2} - l(l-1) \geq l^4.
\end{equation}
For $ l \geq 2$, it follows that the optimal rank $k_l^*$ can be evaluated as
\begin{equation}
k^*_l = \lceil l^2 - \frac{\sqrt{2}}{2}\sqrt{(l-1)^2l(l+2)}\rceil
\label{eq:exactrankn}
\end{equation}

For large $l$, $k^*_l \approx \frac{2 - \sqrt{2}}{2} l^2 + O(l) \approx 0.29l^2$. 

Second, we consider the case in which $l=r$ and $b=c$. For $c\geq l \geq 2$, the optimal rank $k^*_{l,c}$ is 

\begin{equation}
k^*_{l,c} = \lceil lc - \frac{\sqrt{2}}{2}\sqrt{(l-1)^2l(l+2)}\rceil
\label{eq:exactrankmn}
\end{equation}

In the limit where $l \ll c$, we observe that $k^*_{l,c}$ is nearly equal to  $lc$, up to a rank reduction of $\sim \frac{\sqrt{2}}{2} l^2$. This matches our expectation that a small disentangler is ineffective if further assumptions are not made about $X$. 

It should be noted that the rank estimate determined by~\eqref{eq:exactrankdim} does not take into account the numerical value of each element in $X$ or the accuracy requirement of the low-rank approximation. Therefore, the estimated rank can be larger than the optimal one.
Nonetheless, the estimate given by Equation \ref{eq:exactrankdim} can serve as a starting point for choosing the rank when benchmarking different optimization methods. For example, by choosing $k \geq k^*$, we can test how well different optimizers find an approximate solution to Equation \ref{eq:exactrank} by minimizing Equation \ref{eq:sv_tail}. 

\subsection{Rank minimization}
We say that $Q$ is a good rank-$k$ disentangler if 
\begin{equation}
c_k(Q) \le \epsilon,
\label{eq:convcheck}
\end{equation}
for some user-provided truncation error tolerance $\epsilon$. However, as we mentioned earlier, we would also like $k$ to be as small as possible. Therefore, in addition to solving the optimization problem \eqref{eq:opt_constrained} or \eqref{eq:opt_on_product_mfld}, we would like to seek an optimal $k$. We propose to identify such a minimal $k$ by using a binary search algorithm given in Figure~\ref{alg:bisection}.
In this algorithm, we initialize left and right bounds for searching for the optimal $k$ at $k_l$ and $k_r$. They can be set to $0$ and $\min(lc, rb)$ if no better bounds are known; in practice, we optimize a surrogate function such as the R\'{e}nyi-1/2 entropy (\ref{eqn:renyi}) and evaluate $c_k$ at the resulting disentangler to initialize $k_r$. 

The next estimate of the optimal rank is taken to be $k=\lfloor(k_l+k_r)/2\rfloor$. For each $k$, we use either a Riemannian optimization or the alternating minimization algorithm to find an optimal disentangler $Q_k$. We then examine the value of $c_k(Q_k)$. If $c_k(Q_k) \leq \epsilon$, we set $k_r$ to $k-1$. Otherwise, we set $k_l$ to $k+1$. While this ensures that the algorithm will terminate when $k_l > k_r$, we note that $k_r$ is not a strict upper bound. As a result, we update the optimal upper bound variable $k_{\rm opt}=k$ whenever (\ref{eq:convcheck}) is satisfied, which is returned when the algorithm terminates. 

\begin{algorithm}[htbp] \label{alg:bisection}
 \caption{Binary search rank selection and disentangler optimization} 
\begin{algorithmic}[1]
\Require 
\Statex $X \in \mathbb{R}^{lr \times bc}$ $\rightarrow$ flattening of tensor ${\cal X}$ to be disentangled
\Statex $Q^{(0)} \in O(lr)$ $\rightarrow$ initial disentangler
\Statex $\epsilon$ $\rightarrow$ truncation error tolerance 
\Statex $\texttt{maxiter}$ $\rightarrow$ maximum number of disentangler optimization routines
\Ensure
\Statex $k_{\rm opt}$ $\rightarrow$ truncation rank
\Statex $Q$ $\rightarrow$ disentangler such that truncation error is bounded by $\epsilon$, i.e., $c_k(Q) \leq \epsilon$
\State $k_l \leftarrow 0$
\State $Q$ $\leftarrow$ RCG disentangler optimization with Rényi-1/2 objective (\ref{eqn:renyi})
\State Determine $k_r$ such that $c_{k_r}(Q) \leq \epsilon$ 
\State $k_{\rm opt} \leftarrow k_r$
\State $\texttt{iter} \leftarrow 1$
\While{$k_l \leq k_r$ and $\texttt{iter} < \texttt{maxiter}$}
\State $k \leftarrow \lfloor(k_l +
k_r)/2 \rfloor$
\State $Q_k$ $\leftarrow$ Alternating or RCG optimization of rank-$k$ truncation error objective $c_{k}(Q)$. 
\If {$c_{k}(Q_k) \leq \epsilon$}
    \State $k_{\rm opt} \leftarrow k$
    \State $Q \leftarrow Q_k$
    \State $k_r \leftarrow k - 1$
\Else
    \State $k_l \leftarrow k + 1$
\EndIf
\State $\texttt{iter} \leftarrow \texttt{iter} + 1$
\EndWhile
\State return $k_{\rm opt}$, $Q$
%\label{alg:backtracking}
\end{algorithmic}
\end{algorithm}

\subsection{Local minima}
We also note that disentangler optimization is a highly non-convex problem, irrespective of the chosen objective function. To illustrate the topography of the objective landscape, we consider a tensor $\chi \in \mathbb{R}^{4\times 4\times 4 \times 4}$ with uniformly distributed random entries in $[0,1]$. We normalize $X$ with respect to the Frobenius norm and consider the objective \eqref{eq:g_def} with $\phi$ chosen as the Heaviside function \eqref{eq:Heaviside} and $\mu$ corresponding to rank-$10$ truncation error, i.e.,  $\sigma_{10} < \mu < \sigma_{11}$. We set all but two entries of the vector $s$ equal to zero and in Figure~\ref{fig:cost_landscape} plot the objective as a function of two non-zero entries $s_1,s_2 \in [-2\pi,2\pi]$. The plot shows multiple local minima within this two-dimensional parameter space. While the local minima differ slightly in value, they are all sufficiently close to one another that, for practical purposes, any of them would yield a suitable disentangler. 

\begin{figure}[!hbtp]
    \centering
    \includegraphics[width=0.5\textwidth]{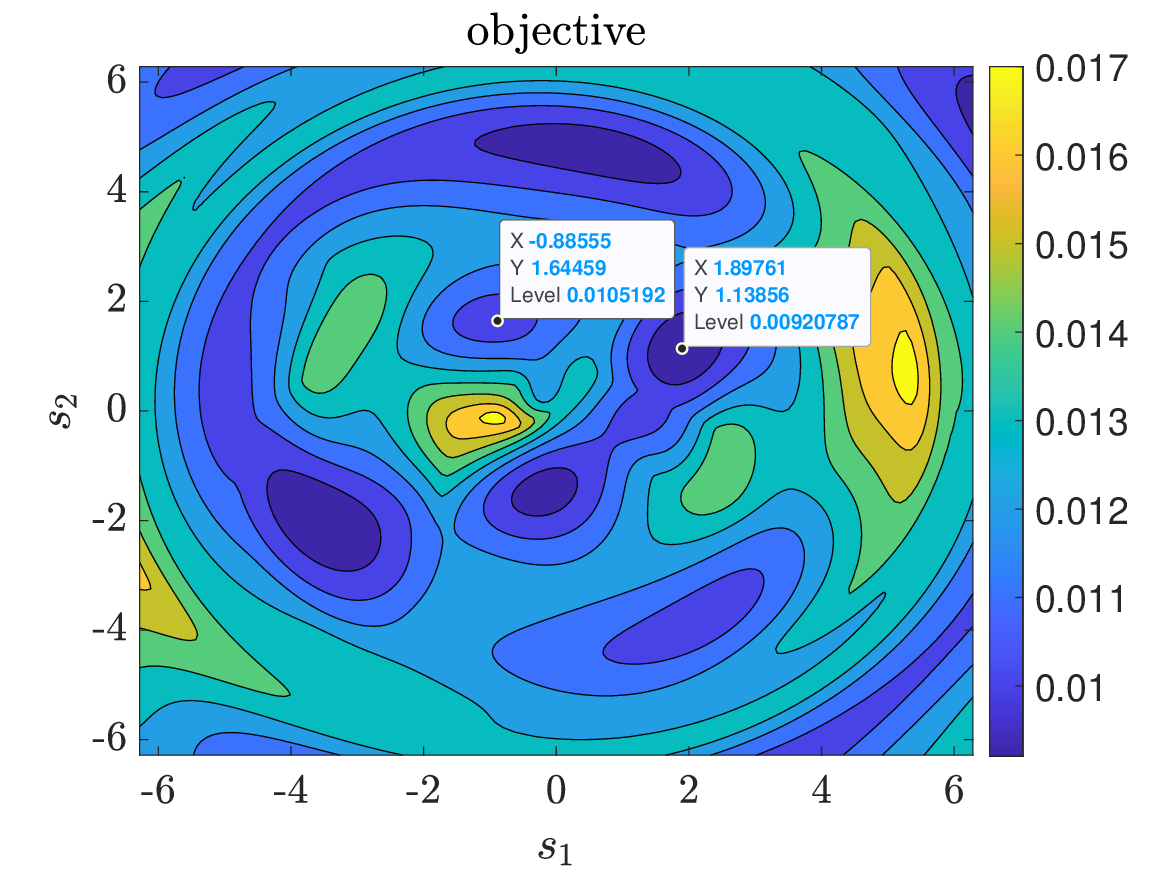}
    \caption{\label{fig:cost_landscape} Objective function \eqref{eq:g_def} measuring rank-$10$ truncation error for a $4 \times 4 \times 4 \times 4$ random tensor plotted over two skew-symmetric degrees of freedom $s_1,s_2 \in [-2\pi,2\pi]$. }
\end{figure}

\section{Numerical examples}
\label{sec:examples}
We test the optimization of disentanglers on two types of tensors. The first is a random tensor with entries drawn from a standard Gaussian distribution. The second is taken from an approximate isometric tensor network state (isoTNS) representation~\cite{zaletel2020isometric} of the ground state of the two-dimensional transverse-field Ising (2D TFI) Hamiltonian,
$
H = -J \sum_{\langle i, j \rangle} Z_i Z_j - g \sum_i X_i,
$
on an $L \times L$ lattice with open boundary conditions. IsoTNS is a tensor network ansatz with isometric constraints that enable efficient contraction and manipulation of 2D quantum states. An important procedure for isometric tensor networks is moving the orthogonality center, which can be done using the Moses Move~\cite{zaletel2020isometric}. The Moses Move consists of a sequence of isometric tensor ring decompositions~\cite{TensorRing2020}, which rewrites a region of the network in a form that shifts the orthogonality center. Disentangler optimization is a central step in the isometric tensor ring decomposition and directly impacts its accuracy and cost. Our second test tensor comes from an isoTNS approximation of the TFI ground state with $L=8$, $g=3.5$, $J=1$, computed using a 2D generalization of time-evolving block decimation  with imaginary-time step size $d\tau = 0.025$, bond dimension $\chi=12$, and orthogonality hypersurface bond dimension $\chi'=20$~\cite{zaletel2020isometric,lin2022efficient}. 

\begin{figure}[!hbtp]
    \centering
    \includegraphics[width=0.7\textwidth]{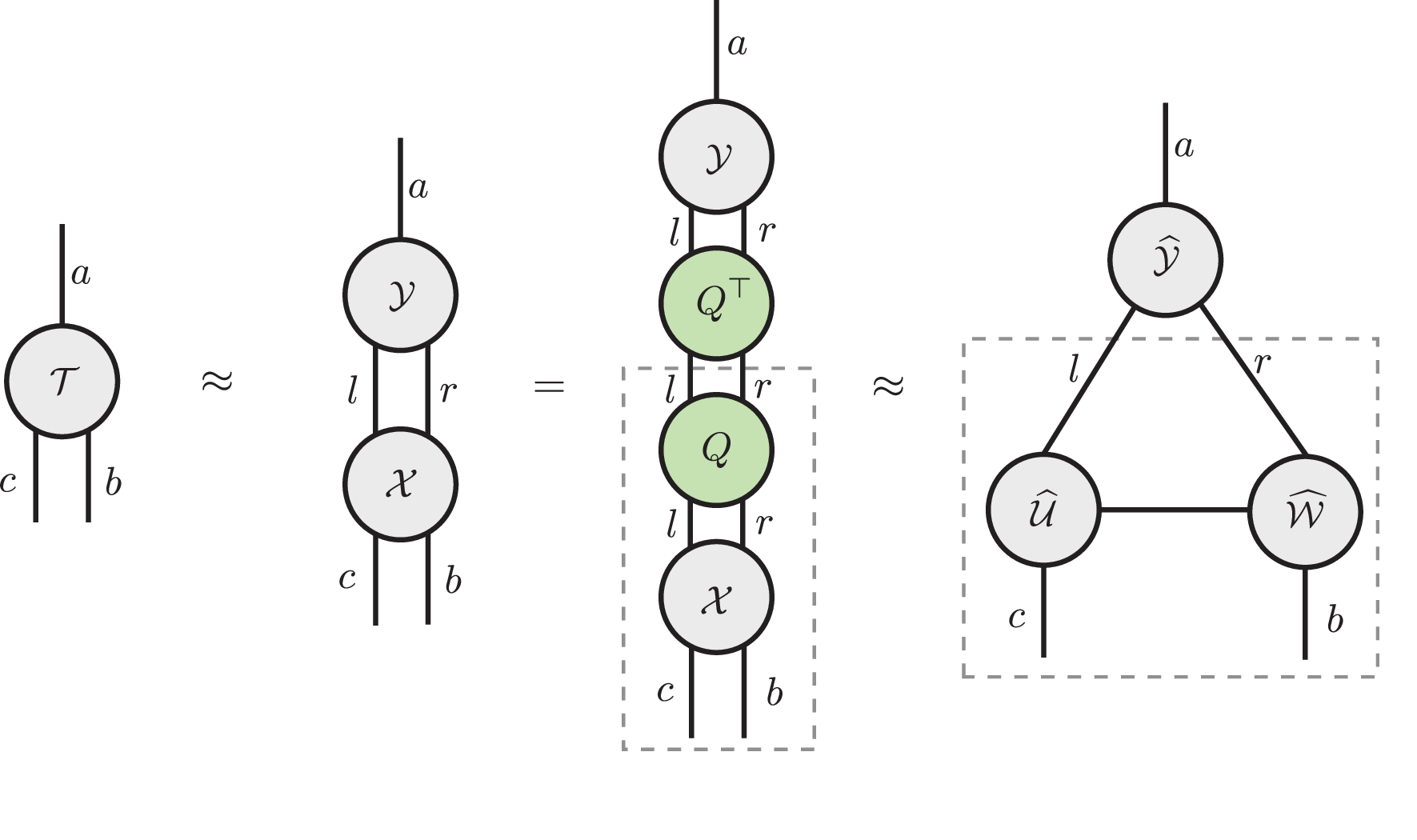}
    \caption{\label{fig:iso_tr} Isometric tensor ring decomposition. A three-dimensional tensor is decomposed using SVD with the resulting bond split into two dimensions indicated by $l$ and $r$. Then an optimized disentangler is inserted to minimize entanglement in $\mathcal{X}$. A second SVD completes the isometric tensor ring decomposition.}
\end{figure}

Hereafter we provide numerical comparisons of objective functions and optimization algorithms described in the preceding sections to our test tensors. All numerical experiments were performed using codes written in Python. We use the Pymanopt software to perform Riemannian optimization~\cite{JMLR:v17:16-177}. Timing measurements were collected using AMD ``Milan" CPUs, with computing resources from the National Energy Research Scientific Computing Center.
\subsection{Comparing different algorithms}
 We first compare different optimization methods for minimizing the objective function \eqref{eq:sv_tail} with a fixed $k$. The methods we compare include the alternating disentangler (Alt), Riemannian conjugate gradient (RCG), and Riemannian trust-region Newton (RTRN). For the RTRN algorithm, we try both the unregularized Hessian, and a regularized Hessian with the regularization parameter $\eta = 10^{-12}$ in \eqref{eq:regHess} (RTRN-reg). We use both the random and isoTNS tensors as test problems. For the equal-dimensional random tensor with size $n=12$, we set $k$ to 44 (following (\ref{eq:exactrankn})). For the isoTNS tensor, we set $k$ to 8. In all numerical experiments, the maximum number of iterations is set to 10,000 for Alt, 4,000 for RCG, and 1,000 for RTRN. The initial guess for the disentangler is set to the identity matrix $I_{lr}$. To determine convergence for Alt, the minimum change in the disentangler norm, $\delta Q$, is set to $10^{-12}$; in addition, we set the minimum change in the objective function norm to $10^{-12}$. To determine convergence for RCG and RTRN, the minimum norm of the Riemannian gradient is set to $10^{-8}$; the RCG method for updating the parameter $\beta^{(i)}$ is Hestenes-Stiefel.

In Figure~\ref{fig:methods_comp_iter}, we plot the change of the truncation error $c_k$ with respect to the iteration number for a few hundred iterations. We observe that, for the random tensor, RTRN has the fastest convergence rate, whereas the Alt exhibits the slowest convergence. The convergence rate of RCG is between the two, which is expected. The convergence of the regularized RTRN behaves similarly to that of RTRN, but truncation error is ultimately limited by the regularization parameter.

For the isoTNS tensor, the truncation error is quickly reduced  from $9 \times 10^{-4}$ to $8 \times 10^{-5}$ within a few hundred iterations of RTRN and Alt. Both algorithms appear to stagnate at this truncation error level from that point on, although Alt yields a slightly smaller truncation error. The convergence of RCG is slower and it stagnates at a higher truncation error. 
\begin{figure}[!hbtp]
\centering
\includegraphics[width=\textwidth]{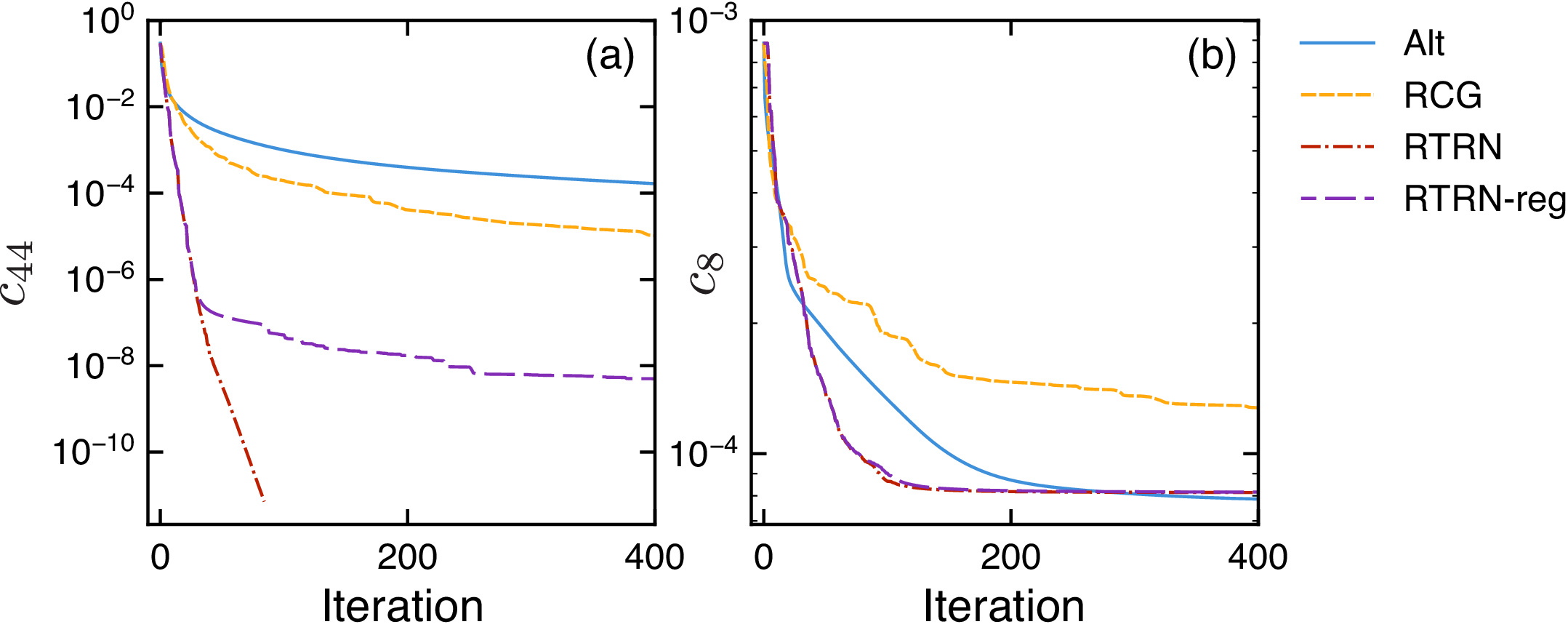}
\caption{\label{fig:methods_comp_iter} Iteration count of different methods for optimizing the truncation error at fixed rank $k$. (a) A random tensor of size (12, 12, 12, 12) with $k=44$. (b) An isoTNS tensor of size (8, 8, 60, 12) with $k=8$.}
\end{figure}
\begin{figure}[!hbtp]
\centering
\includegraphics[width=\textwidth]{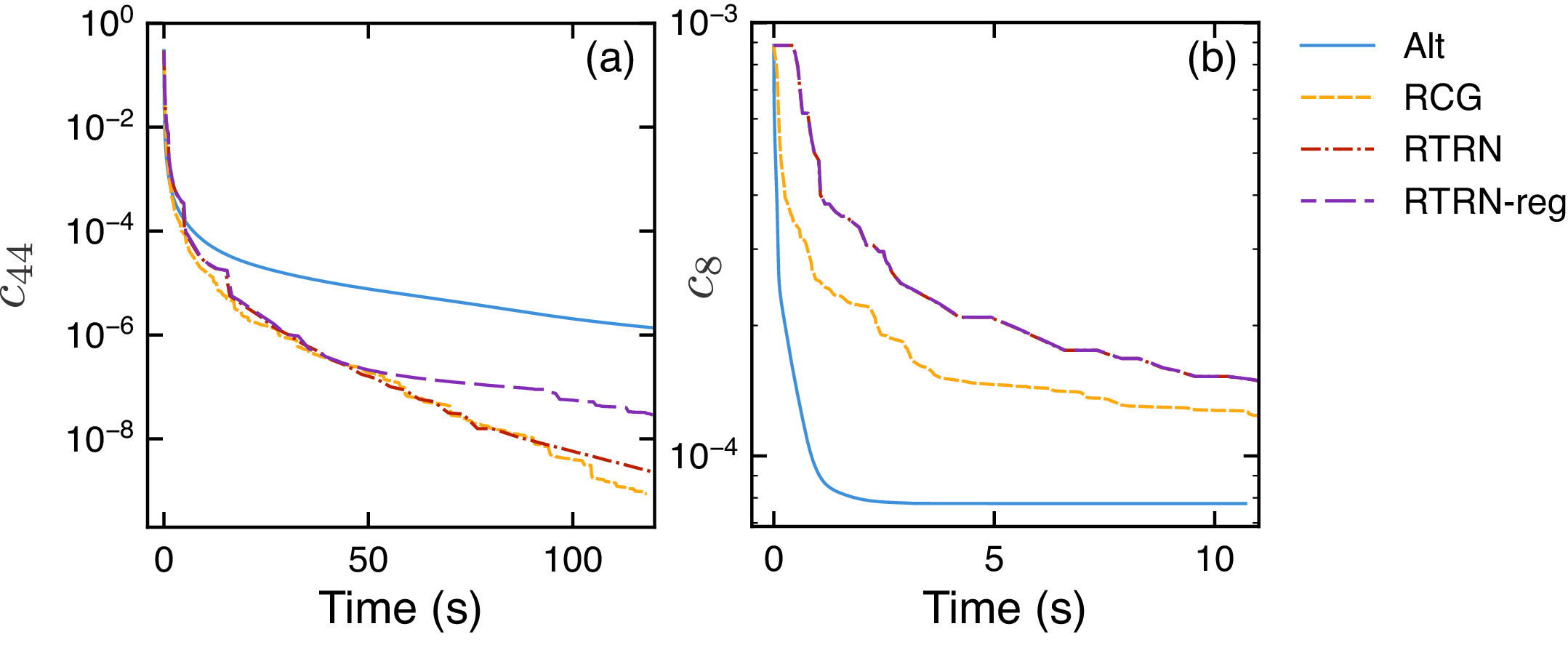}
\caption{\label{fig:methods_comp_time} Wall time of different methods for optimizing the truncation error at fixed rank $k$. (a) A random tensor of size (12, 12, 12, 12) with $k=44$. (b) An isoTNS tensor of size (8, 8, 60, 12) with $k=8$.}
\end{figure}

Although the iteration count of each algorithm shows their convergence rate, it does not fully reflect the computational cost of the algorithm, which also depends on the cost per iteration.
A single (outer) iteration of the RTRN algorithm contains many inner conjugate gradient micro-iterations (set to a maximum of 100 inner iterations), and is thus more costly than a single iteration of RCG or Alt. 

In Figure~\ref{fig:methods_comp_time}, we compare the efficiency of different algorithms by plotting the truncation error against the wall clock time, corresponding to roughly thousands of Alt and RCG iterations. In the random case, we find that for a given truncation error, RCG and RTRN converge comparably quickly; including Hessian information does not seem to improve the convergence efficiency. While the alternating disentangler performs better for the first few iterations, it seems to eventually stagnate. In contrast, in the isoTNS case, the alternating disentangler converges the fastest to the lowest truncation error. Figure~\ref{fig:methods_comp_sv}(b) shows that Alt yields a visibly faster decay in singular values when disentangling this type of tensor. For either type of tensor, we do not find an improvement in convergence when the Hessian is regularized.

\begin{figure}[!hbtp]
\centering
\includegraphics[width=\textwidth]{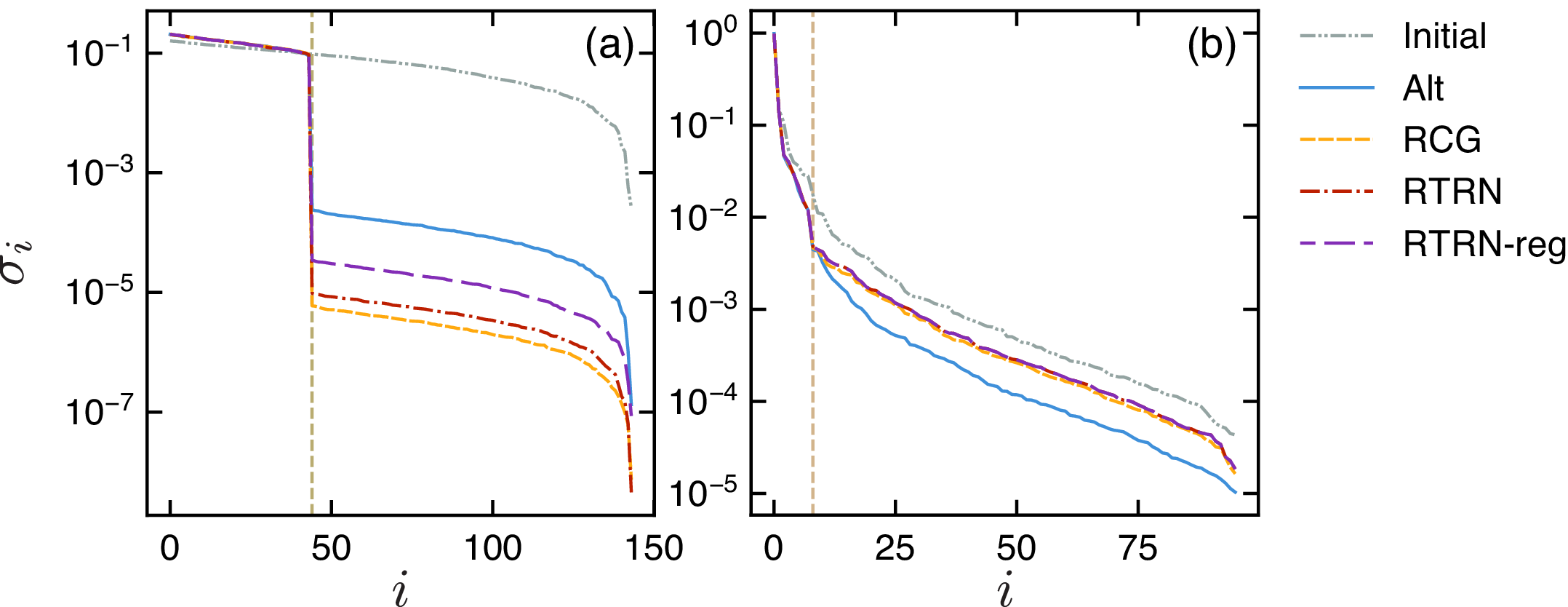}
\caption{\label{fig:methods_comp_sv} Singular value spectrum of different methods after optimizing the truncation error at fixed rank $k$, indicated by the vertical dotted lines. The spectra for different algorithms are taken at approximately the same wall time (the maximum timescales in Figure~\ref{fig:methods_comp_time}). (a) A random tensor of size (12, 12, 12, 12) with $k=44$ at time $t = 118 \rm{\ s}$. (b) An isoTNS tensor of size (8, 8, 60, 12) with $k=8$ at $t = 11 \rm{\ s}$.}
\end{figure}

\subsection{Hybrid optimization method}
We combine different optimization algorithms in order to test whether a hybrid scheme can lead to a more efficient method. For example, we can use the alternating disentangler first, and then switch to either the RCG or RTRN method as shown in Figure~\ref{fig:hybrid}. For random tensors, we observe that beginning with several iterations of the alternating disentangler before switching to RCG results in a similar convergence time as RCG, while switching to RTRN may lead to lower efficiency than RTRN. For the isoTNS tensor, where the alternating disentangler had the best performance, we observe that switching to either RCG or RTRN after starting with Alt does not improve convergence, and instead results in stagnation of the cost function. For this case, we observe that the norm of the gradient fluctuates around $10^{-4}$ or $10^{-5}$ in both RCG and RTRN (insets of Figure~\ref{fig:hybrid}(b, d) depending on the number of Alt iterations taken prior to switching to RCG or RTRN, potentially due to the presence of many local minima. Since it does not rely on the gradient, the alternating disentangler seems to escape these local minima more efficiently. These results suggest that for optimizing an unknown tensor, starting with many iterations of the Alt disentangler, followed by RCG if Alt does not converge, may be an appropriate hybrid scheme.    

\begin{figure}[!hbtp]
\centering
\includegraphics[width=\textwidth]{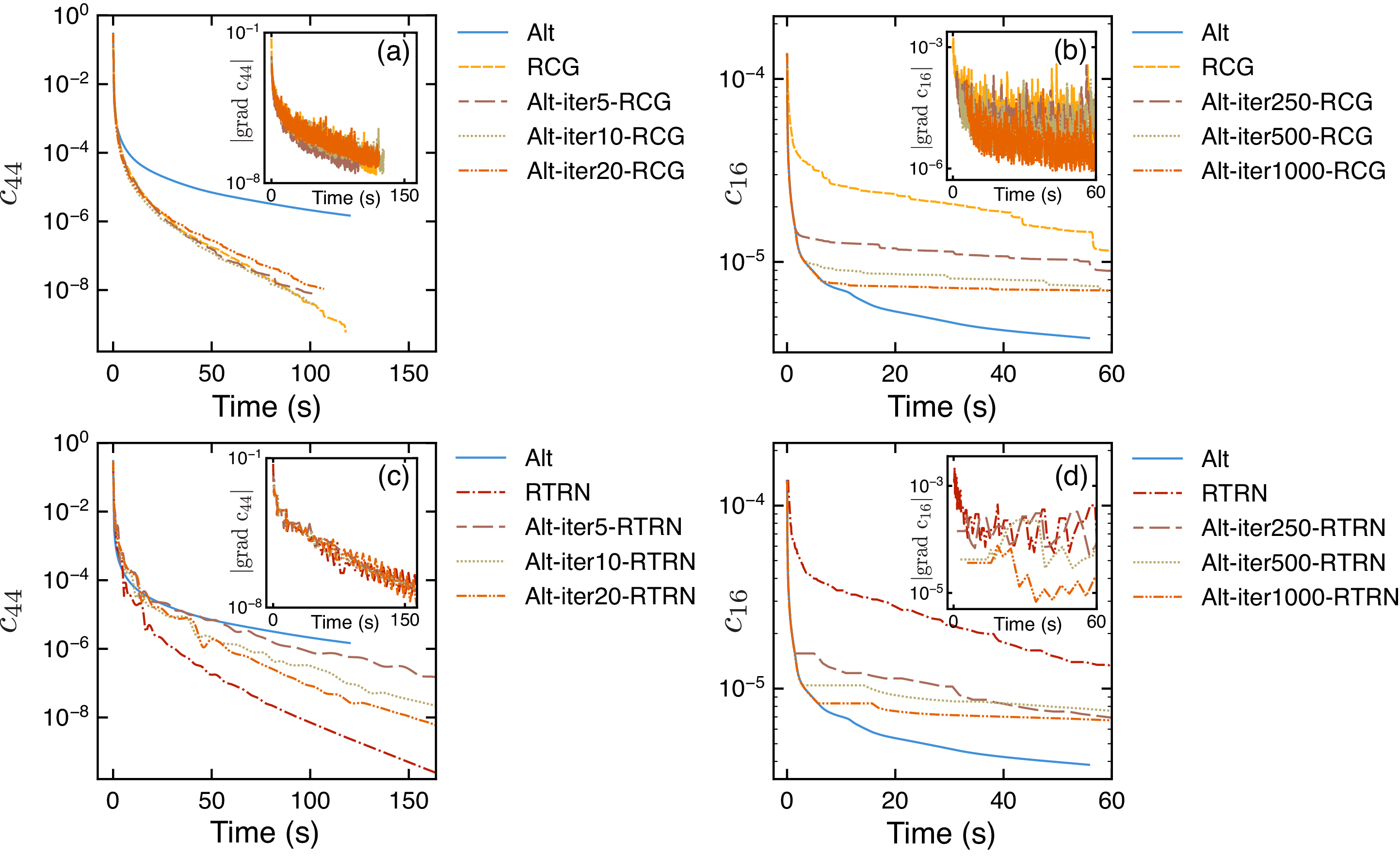}
\caption{ Wall time of different hybrid schemes for optimizing the truncation error at fixed rank $k$, where the inset shows the norm of the gradient during a representative run. (a) Timings for Alt with varied iterations (5, 10, 20), followed by RCG for a random tensor of size (12, 12, 12, 12) with $k=44$. The primary figure shows averaged timing results over five random tensors. (b) Timings for Alt with varied iterations (250, 500, 1,000), followed by RCG for an isoTNS tensor of size (8, 8, 60, 12) with $k=16$. (c) Timings for Alt with varied iterations (5, 10, 20), followed by RTRN for random tensors of the same size as (a). (d) Timings for Alt with varied iterations (250, 500, 1,000), followed by RTRN for an isoTNS tensor, as in (b). \label{fig:hybrid}}
\end{figure}

\subsection{Rank minimization} 
As we discussed earlier, the minimal rank $k_{\rm opt}$ that can be achieved by a disentangler depends on the amount of truncation error $c_{k_{\rm opt}}(Q)$ we can tolerate. The value of $k_{\rm opt}$ is generally unknown in advance.  In this section, we show how such a minimal rank can be determined for a given truncation error tolerance $\epsilon$ using the binary search method (Algorithm~\ref{alg:bisection}) presented in section~\ref{sec:practical}.

Algorithm~\ref{alg:bisection} requires an initial estimation of the minimal rank as an input. We can use either the estimate obtained from \eqref{eq:exactrankdim}, which is derived from counting the number of degrees of freedom, or perform a disentanglement optimization using one of the surrogate objective functions such as the von Neumann ($S_\textrm{vN}$) and Rényi-$\alpha$ ($S^\alpha$) entropies, defined in \eqref{eqn:vonneumann} and \eqref{eqn:renyi}, respectively. Once we obtain an optimal disentangler $Q$, we can then choose a $k$ that satisfies $c_k(Q) \leq \epsilon$.

In Figure~\ref{fig:obj_func}, we plot the truncation error \eqref{eq:sv_tail} versus truncation rank $k$ for disentanglers $Q=I$, $Q=Q_{\rm vN}$, and $Q=Q_{\mbox{R\'{e}nyi}}$, where $Q_{\rm vN}$ and $Q_{\mbox{R\'{e}nyi}}$ are obtained by minimizing the von Neumann and Rényi-1/2 entropy objective functions using the RCG algorithm. 
We can see from Figure~\ref{fig:obj_func}(a) that, for the random tensor, the truncation error of $c_k(Q_{\rm vN})$ decreases slowly with respect to $k$. The rate of change in truncation error $c_k(Q_{\rm vN})$ is similar to that of $c_k(I)$, although $c_k(Q_{\rm vN})$ is slightly below $c_k(I)$ for each $k$. This indicates that $\sigma_i({\bf A}(QX))$ decreases slowly with respect to $i$ for this choice of $Q$, which is shown in Figure~\ref{fig:obj_func_sv}(a).

When the R\'{e}yi-1/2 entropy $S^{1/2}$ is chosen as the objective function, $c_k(Q_{\mbox{R\'{e}nyi}})$ starts to decrease more rapidly when $k$ is close to 50, and becomes less than $10^{-20}$ when $k$ is slightly less than 100. 
Clearly, $Q_{\mbox{R\'{e}nyi}}$ is a much better disentangler for the random tensor.  Using the error tolerance $\epsilon = 10^{-6}$, we obtain the initial estimate $k=89$. Using this initial rank estimate as the input to the binary search algorithm, we can obtain the minimal rank $k_{\rm opt}=44$ in six steps. Table~\ref{table:binarysearch_randn} shows the change of rank and the corresponding $c_k$. We observe that this tolerance yields a rank estimate that matches Equation \eqref{eq:exactrankn}, and that the truncation error rapidly increases once $k < 44$. 
If we were to use the degrees of freedom count, which provides an initial rank estimate of $k^*=44$, we note that one fewer binary search is needed to reach the optimal rank $k_{\rm opt}$.

\begin{table}
    \centering
    \begin{tabular}{cccccc}
    \toprule
     \texttt{iter} & $k$ & $k_l$ & $k_r$ & $k_{\rm opt}$ & $c_k$ \\
    \midrule
    $1$ & $45$ & $0$ & $89$ & $89$ & $2.35 \times 10^{-11}$ \\
    $2$ & $22$ & $0$ & $44$ & $45$ & $2.39 \times 10^{-2}$ \\
    $3$ & $34$ & $23$ & $44$ & $45$ & $2.08 \times 10^{-3}$\\
    $4$ & $40$ & $35$ & $44$ & $45$ & $1.24 \times 10^{-4}$ \\
    $5$ & $43$ & $41$ & $44$ & $45$ & $1.94 \times 10^{-6}$ \\
    $6$ & $\bf{44}$ & $44$ & $44$ & $45$ & 
    $\bf{8.80 \times 10^{-10}}$ \\
    \bottomrule
    \end{tabular}
\caption{Binary search results for minimizing the rank of a random tensor of size (12, 12, 12, 12) using CG optimization, given a truncation error tolerance $\epsilon = 10^{-6}$. The final value of $k_{\rm opt}$ and the corresponding cost $c_{k_{\rm opt}}$ are highlighted in bold.}
\label{table:binarysearch_randn}
\end{table}

\begin{table}
    \centering
    \begin{tabular}{cccccc}
    \toprule
     \texttt{iter} & $k$ & $k_l$ & $k_r$ & $k_{\rm opt}$ & $c_k$ \\
    \midrule
    $1$ & $19$ & $0$ & $38$ & $38$ & $1.99 \times 10^{-6}$ \\
    $2$ & $29$ & $20$ & $38$ & $38$ & $4.44 \times 10^{-7}$ \\
    $3$ & $\bf{24}$ & $20$ & $28$ & $29$ & $\bf{8.25 \times 10^{-7}}$ \\
    $4$ & $22$ & $20$ & $23$ & $24$ & $1.23 \times 10^{-6}$ \\
    $5$ & $23$ & $23$ & $23$ & $24$ & 
    $1.04 \times 10^{-6}$ \\
    \bottomrule
    \end{tabular}
\caption{Binary search results for minimizing the rank of an isoTNS tensor of size (8, 8, 60, 12) using alternating disentangler optimization, given truncation error tolerance $\epsilon = 10^{-6}$. The final value of $k_{\rm opt}$ and the corresponding cost $c_{k_{\rm opt}}$ are highlighted in bold.}
\label{table:binarysearch_tfi}
\end{table}

In contrast, for the isoTNS example, the initial tensor already has a relatively fast decay in singular values. In Figure~\ref{fig:obj_func}(b), we instead observe that the values of $c_k(Q)$ are comparable for $Q=Q_{\rm vN}$ and $Q=Q_{\mbox{R\'{e}nyi}}$. Since $b, c \gg l,r$ in the isoTNS case, the minimal $k$ that makes \eqref{eq:exactrankdim} no less than $lrbc$ for $l=r=8$, $b=60$ and $c=12$ is $k^* = 91$. This is close to the full rank of $\bf{A}(X)$, and a poor initial guess of the minimal rank given the error tolerance $\epsilon =10^{-6}$. 
Therefore, we use the initial rank estimate obtained from the optimal $Q_{\mbox{R\'{e}nyi}}$ as the input to Algorithm~\ref{alg:bisection} to seek the minimal rank. Starting with an initial rank estimate of $k=38$, Table~\ref{table:binarysearch_tfi} shows that a few iterations are sufficient to find a minimal rank of $k_{\rm opt} = 24$. We observe in Figure \ref{fig:obj_func}(b) that compared to optimizing the Rényi-1/2 objective, using Alt to optimize (\ref{eq:opt_SVs}) with a target rank of $k_{\rm opt} = 24$ further reduces the truncation error by a factor of $6$.

\begin{figure}[!hbtp]
\centering
\includegraphics[width=\textwidth]{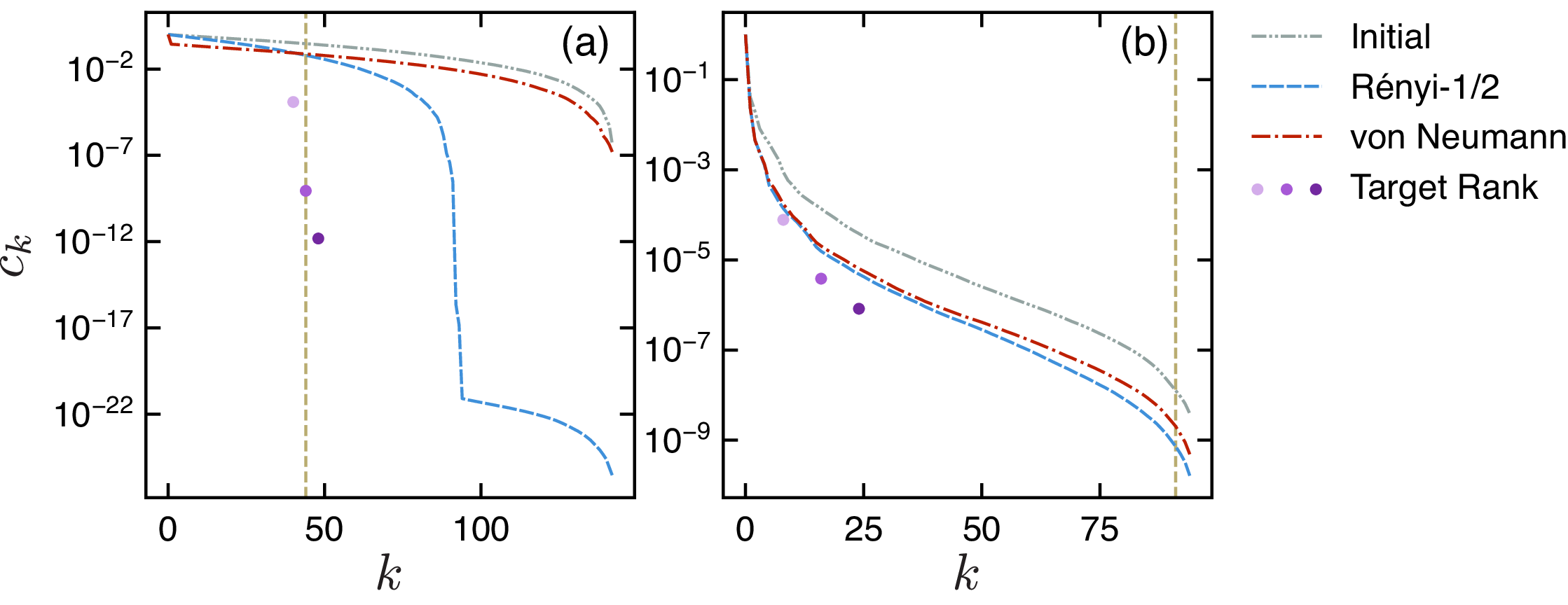}
\caption{Comparison of different objective functions for rank estimation. The von Neumann and Rényi-1/2 objectives are optimized using RCG. The vertical dotted lines indicate $k^*$ (Equation \ref{eq:exactrankdim}). (a) A random tensor of size (12, 12, 12, 12), with target rank values $k \in \{40, 44, 48\}$. The truncation error objective is optimized using RCG. (b) An isoTNS tensor of size (8, 8, 60, 12), with target rank values $k \in \{8, 16, 24\}$. The truncation error objective is optimized using Alt.}
\label{fig:obj_func}
\end{figure}

\begin{figure}[!hbtp]
\centering
\includegraphics[width=\textwidth]{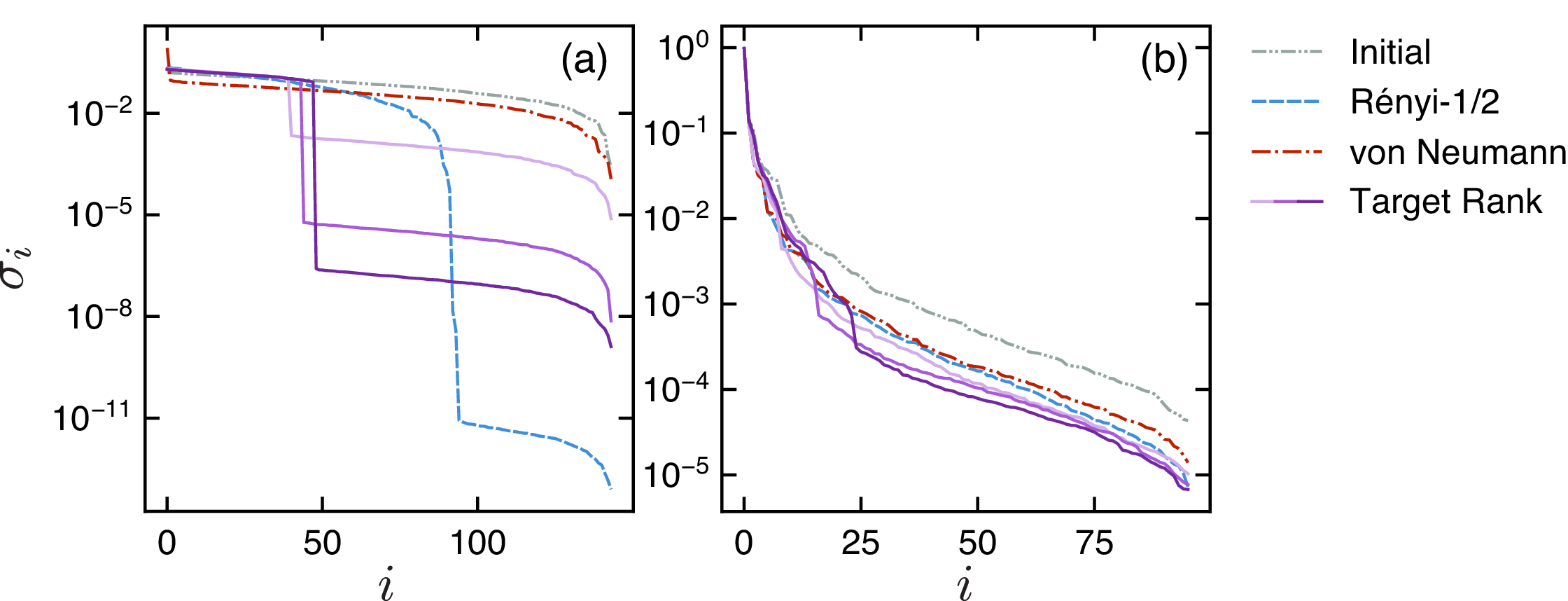}
\caption{ 
Comparison of the singular value distribution after optimization of different objective functions, using the same methods as Figure~\ref{fig:obj_func}. (a) A random tensor of size (12, 12, 12, 12). (b) An isoTNS tensor of size (8, 8, 60, 12).}
\label{fig:obj_func_sv}
\end{figure}

\section{Conclusion}
\label{sec:conclude}
In this work, we have formulated the tensor disentanglement problem within the framework of Riemannian optimization. We discussed the choice of different objective functions and showed how both the Riemannian gradient and Hessian-vector products can be computed efficiently for first- and second-order Riemannian optimization algorithms. We have also shown that the disentangling task can be formulated as a joint optimization over orthogonal and low-rank matrices, and solved by an alternating minimization algorithm with lower per-iteration complexity compared to Riemannian optimization methods. Practical issues such as choosing an initial rank estimate and minimizing the rank via a bisection algorithm were also addressed. 

Numerical experiments were conducted on both synthetic random and application-driven tensors. The results show that while the Riemannian trust-region Newton method achieves the fastest convergence in terms of the number of outer iterations, its high per-iteration cost limits its practical utility. In contrast, the Riemannian conjugate gradient algorithm is more efficient for random tensors, whereas alternating minimization is the fastest for structured tensors arising from physics applications. Furthermore, we demonstrated how the bisection strategy can effectively identify the minimal rank, with different rank initialization strategies.

Looking forward, we aim to improve the practical viability of second-order methods by developing effective preconditioners to accelerate the solution of the Newton correction equation. Additionally, we plan to investigate alternative constraints on the disentangler in order to enhance the quality of entanglement reduction across the tensor network and broaden the applicability of the proposed framework.

\bmhead{Acknowledgements}
The authors wish to thank S. Anand, S.-H. Lin, Y. Wu, M. P. Zaletel, N. Y. Yao, C. Zhang, and A. Seigal for helpful discussions about running isoTNS algorithms and the existence of exact disentanglers. J. W. acknowledges support by the U.S. Department of Energy Computational Science Graduate Fellowship under award number DE-SC0022158. This work was also supported in part by the U.S. Department of Energy, Office of Science, Office of Advanced Scientific Computing Research, Scientific Discovery through Advanced Computing (SciDAC) Program through the FASTMath Institute under U.S. Department of Energy Contract No. DE-AC02-05CH11231. (A. D. and C. Y.)

\bibliography{sources}
\end{document}